\documentclass{interact}

\usepackage[T1]{fontenc}
\usepackage{lmodern}
\usepackage{mathtools,array,longtable}
\usepackage[numbers,sort&compress]{natbib}
\bibpunct[, ]{[}{]}{,}{n}{,}{,}

\usepackage[hidelinks]{hyperref}
\hypersetup{
  pdftitle={Eigenspaces of Preserved Graph Laplacian Eigenvalues under Positive Edge-Weight Addition},
  pdfauthor={Maribel Bueno, Susana Furtado and Javier Perez},
  pdfsubject={Graph Laplacian eigenvalues under positive edge updates},
  pdfkeywords={graph Laplacian, positive edge updates, eigenvalue preservation, eigenspace reconstruction, low-rank perturbations, exceptional weights}
}

\theoremstyle{plain}
\newtheorem{theorem}{Theorem}[section]
\newtheorem{proposition}[theorem]{Proposition}
\newtheorem{lemma}[theorem]{Lemma}
\newtheorem{corollary}[theorem]{Corollary}
\newtheorem{example}[theorem]{Example}
\theoremstyle{definition}
\newtheorem{definition}[theorem]{Definition}
\theoremstyle{remark}
\newtheorem{remark}[theorem]{Remark}

\DeclareMathOperator{\rank}{rank}
\DeclareMathOperator{\range}{range}

\begin{document}
\title{Eigenspaces of Preserved Graph Laplacian Eigenvalues under Positive Edge-Weight Addition}
\author{%
\name{Maribel Bueno\textsuperscript{a}\thanks{CONTACT Maribel Bueno. Email: mbueno@ucsb.edu},
Susana Furtado\textsuperscript{b} and Javier P\'erez\textsuperscript{c}}
\affil{\textsuperscript{a}Department of Mathematics and College of Creative Studies,
University of California, Santa Barbara, Santa Barbara, CA 93106, USA;
\textsuperscript{b}CEMS.UL and Faculdade de Economia, Universidade do Porto,
Rua Dr. Roberto Frias, 4200--464 Porto, Portugal;
\textsuperscript{c}Department of Mathematical Sciences, University of Montana,
Missoula, MT 59812, USA}
}

\maketitle

\noindent{\small\textbf{Approximate word count:} 7,530 (abstract and
main text, excluding  references and appendices).}

\par\bigskip

\begin{abstract}
We study how the eigenspace associated with a fixed graph Laplacian eigenvalue changes under positive edge updates, allowing both new edges and increases in existing edge weights. We show that a preserved eigenvalue may have inherited eigenvectors from the original eigenspace and emergent eigenvectors outside it. We characterize the inherited eigenspace and show that it is independent of the update weights. We characterize its orthogonal complement within the preserved eigenspace through a symmetric matrix whose size is at most the number of updated edges, yielding a multiplicity formula and an explicit reconstruction of the updated eigenspace. For fixed updated edges, we characterize when emergence is possible and show that, whenever it is, equal update weights suffice. The exceptional weights form the zero set of a nonzero multi-affine polynomial in reciprocal variables and have Lebesgue measure zero. Finally, emergence cannot occur at either of the two smallest Laplacian eigenvalues, counting multiplicities.

\end{abstract}

\begin{keywords}
Graph Laplacian; positive edge updates; eigenvalue preservation;
eigenspace reconstruction; low-rank perturbations; exceptional weights
\end{keywords}

\begin{amscode}
Primary 05C50; Secondary 15A18, 15A10
\end{amscode}

\section{Introduction}

The spectrum of the Laplacian of a  weighted undirected graph
contains important information about the structure of the graph and plays a central
role in spectral graph theory and its applications; see, for example,
\cite{BrouwerHaemers2012,Chung1997,Merris1994}.

Adding new  positively-weighted edges to the graph,  increasing the weights of
existing edges, or doing both, changes the Laplacian by a positive
semidefinite perturbation whose rank is at most the number of updated
edges. We refer to these operations as \emph{positive edge updates}.
The effect of such  edge modifications
on Laplacian spectra has been studied from several points of view. Merris's
eigenvector principles give conditions under which certain Laplacian
eigenvalues and eigenvectors remain unchanged after edges are modified
\cite{Merris1998}. Generic spectral properties under changes in the
weights of existing edges were studied in \cite{PoignardPereiraPade2018}.
Other work  on edge modifications includes rank-one perturbation
methods for single-edge addition \cite{So1999}, graph transformations that
preserve an eigenvalue or change it in a prescribed way
\cite{CaputoKnippel2023}, edge perturbations for graphs with
additional structure \cite{CardosoRojo2017}, spectral integral variation under
edge addition \cite{BarikPati2005,Fan2002,Kirkland2004,WangFan2006}, spectral
$w$-variation {under a positive single-edge update} for trees and unicyclic
graphs \cite{BasumataryKalita2025,BasumataryKalita2026}, among others.

Merris's principle gives a simple condition under which 
an eigenvalue and a corresponding eigenvector remain unchanged after addition
of an edge. Suppose that a new unweighted (or, equivalently, with
unit weight)  edge joins vertices $i$ and $j$, and let $x$ be an
eigenvector of the original Laplacian. If $x_{i}=x_{j}$, then adding
that  edge preserves both $x$ and its eigenvalue,
regardless of the positive weight assigned to the edge. The same idea extends
directly to several edge updates: { if an original eigenvector has equal
entries at the endpoints of every updated edge,} then it remains an
eigenvector with the same eigenvalue. We present this
extension in Proposition~\ref{lem:inherited-eigenvectors}.

Simple examples show that when an eigenvalue is preserved under edge addition, eigenvectors not in the original eigenspace may occur.
This led us to a broader question. 
When an eigenvalue is preserved under edge addition, or more generally under a positive edge update, which corresponding eigenvectors are preserved and which are new?
A positive edge update increases the weights of one or more edges.
Adding a new edge can be viewed as increasing its weight from zero to a positive value.

For a preserved eigenvalue, we call an updated eigenvector \emph{inherited} if it belongs to the original eigenspace for the same eigenvalue, and \emph{emergent} otherwise. A preserved eigenvalue may have inherited eigenvectors, emergent eigenvectors, or both.

There is an important difference between these two mechanisms.
 For fixed updated edges, whether an original eigenvector is preserved
with the same eigenvalue does not depend on the positive update weights,
whereas emergence can occur only for special choices of those weights.
Example~\ref{ex:motivating} illustrates this behavior, and later we make
the meaning of ``special'' precise.

The goal of this paper is to understand these two mechanisms under
positive edge updates.
For a preserved eigenvalue, we express the updated eigenspace as the direct sum of the inherited subspace and a canonical emergent subspace. The inherited subspace is independent of the new edge weights, while the emergent subspace may change as the weights vary. We characterize this dependence through a smaller matrix in which the role of the edge weights can be analyzed explicitly. This formulation describes the complete updated eigenspace and allows us to determine whether emergence occurs, identify the positive weight vectors for which it occurs, and compute  both the dimension of the canonical emergent subspace and the multiplicity of the preserved eigenvalue.

Our problem is also related to the classical theory of low-rank perturbations of symmetric and Hermitian matrices~\cite{ArbenzGanderGolub1988,
ArbenzGolub1988,BeattieFox1988,BunchNielsenSorensen1978,Golub1973,
Haemers1995}. 
In our setting, the perturbation has additional structure coming from the updated edges and their weights. 
We exploit this structure to study the eigenspace associated with a preserved eigenvalue and, in particular, to distinguish inherited from emergent eigenvectors.
This leads to a reduced matrix whose size is at most the number of updated edges and which makes explicit the dependence on the individual update weights.

The structure of the paper and its main contributions are
summarized next. After introducing the update model and
motivating examples in Sections~\ref{sec:preliminaries} and~\ref{sec:mechanisms}, we
characterize the inherited eigenspace for a preserved eigenvalue
$\lambda$  in Section~\ref{sec:inherited-eigenspace}. We show that an original
eigenvector is inherited precisely when its entries agree at the
endpoints of every updated edge. We compute the dimension of the inherited eigenspace and show that both the eigenspace and its dimension depend only on the updated edges, and not on their positive weights.
 We also show that emergence at $\lambda$ is impossible if every eigenspace corresponding to an eigenvalue smaller than $\lambda$ is preserved. In particular, when the second
smallest eigenvalue, counting multiplicities, remains in the
updated spectrum, its updated eigenspace consists entirely of inherited vectors. For a connected graph, this applies  to its original algebraic
connectivity.

Emergent eigenvectors (together with the zero vector) need
not form a subspace. Moreover, two emergent eigenvectors may
differ by an inherited vector. In Section \ref{sec:inherited-emergent-decomposition},
we therefore define the canonical emergent subspace as the orthogonal
complement of the inherited subspace within the updated eigenspace.
 We show that every edge-difference vector arising from an updated eigenvector determines a unique representative in the canonical emergent subspace.

In Section~\ref{sec:reduced-characterization-reconstruction},  we
characterize the canonical emergent subspace through the nullspace of
a symmetric matrix whose size is at most the number of updated edges.
Its nullity equals the dimension of the canonical emergent subspace.
This gives a formula for the multiplicity of the preserved eigenvalue
after the update, together with an explicit reconstruction of the
canonical emergent subspace and, consequently, of the complete updated
eigenspace.
 Once the required data from the original Laplacian are available, the reduced matrix determines whether emergence occurs and the multiplicity of the preserved eigenvalue for each new choice of update weights, without solving a new full-size eigenvalue problem.
This is particularly useful when the number of updated edges is much
smaller than the number of vertices.

 In Section~\ref{sec:exceptional-weights}, we study the positive weight
vectors that produce emergence for a fixed original eigenvalue and a
fixed set of updated edges.
We determine when they
exist and show that, in reciprocal weight variables, they form the zero set,
within the positive weight space, of a nonzero polynomial that is
affine in each variable.
Consequently, the exceptional set has Lebesgue measure zero. This 
motivates calling them  \emph{exceptional}.
Although infinitely many exceptional weight vectors may exist, emergence is not generic. A small generic change in the weights will typically destroy emergence. 

The same polynomial also controls the number of emergent directions. We show that,
at a regular point of its zero set, the canonical emergent subspace has dimension one. 
Higher-dimensional emergence can occur only at singular points.
Along any fixed ray in the positive weight space, only finitely many positive scaling factors can be exceptional.
For a single updated edge, there is at most one exceptional positive weight.

The examples throughout the paper illustrate both preservation
mechanisms,  including emergence caused by adding a new edge or
increasing the weight of an existing edge.
The final example considers a cycle on \(1000\) vertices with three
updated edges.
A \(3\times3\) reduced matrix determines the exceptional weights and
the multiplicity of the preserved eigenvalue.
In particular, it identifies a weight choice that produces a
two-dimensional canonical emergent subspace, without computing the
relevant eigenspace of the full \(1000\times1000\) updated Laplacian.

The paper requires a fair amount of notation. 
For convenience, Appendix~\ref{app:notation} contains a summary of the notation used throughout the paper.

\section{Notation and preliminaries}
\label{sec:preliminaries}

We begin by introducing some notation used throughout the paper. We denote the
identity matrix of size $k$ by $I_k$, or simply by $I$ when its size is clear
from the context. We denote by $e_i$ the $i$th standard basis vector and by
$\mathbf 1_k$ the all-ones vector in $\mathbb R^k$. For a matrix $M$, we
denote its kernel, range, and rank by $\ker(M)$, $\range(M)$, and $\rank(M)$,
respectively. For a set of vectors $T\subseteq \mathbb{R}^k$, we denote by $\operatorname{span}(T)$  the subspace of $\mathbb{R}^k$ generated by $T$. If $\mathcal S$ is a subspace of $\mathbb R^k$, then
$\mathcal S^\perp$ denotes its orthogonal complement. All orthogonality is
with respect to the standard Euclidean inner product. The Moore--Penrose
inverse of $M$ is denoted by $M^\dagger$.

For a square matrix $M\in\mathbb R^{k\times k}$ and $\lambda\in\mathbb R$,
we define
\begin{equation}
\label{eq:eigenspace-notation}
E_\lambda(M):=\ker(M-\lambda I_k),
\qquad
m_\lambda(M):=\dim E_\lambda(M).
\end{equation}
We denote the spectrum of $M$ by $\sigma(M)$. If
$\lambda\notin\sigma(M)$, then $E_\lambda(M)=\{0\}$ and
$m_\lambda(M)=0$. If $\lambda\in\sigma(M)$, then $E_\lambda(M)$ is the
$\lambda$-eigenspace of $M$, and $m_\lambda(M)$ is the geometric
multiplicity of $\lambda$. When $M$ is symmetric, the geometric and
algebraic multiplicities coincide. A nonzero vector in $E_\lambda(M)$ is
called a $\lambda$-eigenvector of $M$.

\subsection{Positively weighted graphs}\label{sec:positivegraph}

Throughout this paper, all graphs are finite, simple, undirected, and
positively weighted. Let $G=(V,E,\eta)$ be such a graph, with vertex set
$V=\{1,\ldots,n\}$, edge set $E$, and weight function
$\eta:E\longrightarrow\mathbb R_{>0}$. For $i,j\in V$, define
\[
\eta_{ij}
:=
\begin{cases}
\eta(\{i,j\}),&\text{if }\{i,j\}\in E,\\
0,&\text{otherwise}.
\end{cases}
\]
Since $G$ is undirected, $\eta_{ij}=\eta_{ji}$. 

The \emph{weighted adjacency matrix} of $G$ is the symmetric matrix
$A_G=(\eta_{ij})_{i,j=1}^n$. The \emph{weighted degree} of vertex $i$ is
$d_i:=\sum_{j=1}^n\eta_{ij}$, and the \emph{weighted degree matrix} is
$D_G:=\operatorname{diag}(d_1,\ldots,d_n)$. The Laplacian of $G$ is
$L_G:=D_G-A_G$. Equivalently,
\begin{equation}
\label{eq:weighted-laplacian-sum}
L_G
=
\sum_{\{i,j\}\in E}
\eta_{ij}(e_i-e_j)(e_i-e_j)^T=
\frac{1}{2}
\sum_{i,j=1}^n
\eta_{ij}(e_i-e_j)(e_i-e_j)^T.
\end{equation}
Consequently, for every $z\in\mathbb R^n$,
\begin{equation}
\label{eq:weighted-laplacian-quadratic-form}
z^TL_Gz
=
\sum_{\{i,j\}\in E}\eta_{ij}(z_i-z_j)^2
\geq0.
\end{equation}
Thus, $L_G$ is a real symmetric positive semidefinite matrix. Moreover,
$L_G\mathbf 1_n=0$, and $\ker(L_G)$ consists of the vectors that are constant
on each connected component of $G$. In particular, $G$ is connected if and
only if
\[
\ker(L_G)=\operatorname{span}\{\mathbf 1_n\}.
\]

Fix $\lambda\in\sigma(L_G)$, and set $m:=m_\lambda(L_G)$. Throughout the paper, we denote by 
$Q_\lambda\in\mathbb R^{n\times m}$ the matrix with orthonormal columns spanning
$E_\lambda(L_G)$. Then
\begin{equation}
\label{eq:spectral-projectors}
P_\lambda:=Q_\lambda Q_\lambda^T
\quad \text{and} \quad
P_{\lambda,\perp}:=I_n-P_\lambda
\end{equation}
are the orthogonal projection matrices onto $E_\lambda(L_G)$ and its orthogonal complement
$E_\lambda(L_G)^\perp$, respectively.

Since \(L_G\) is symmetric, its eigenspaces corresponding to distinct
eigenvalues are mutually orthogonal and
\[
\mathbb R^n
=
\bigoplus_{\mu\in\sigma(L_G)} E_\mu(L_G).
\]
Then,
\begin{equation}
\label{eq:spectral-decomposition-L}
L_G
=
\sum_{\mu\in\sigma(L_G)} \mu P_\mu,
\qquad
I_n
=
\sum_{\mu\in\sigma(L_G)} P_\mu.
\end{equation}
The first identity is the \emph{spectral decomposition of \(L_G\)}.

\subsection{The positive edge-update model}
Let \(G=(V,E,\eta)\) be a graph as described in
Subsection~\ref{sec:positivegraph}. We modify \(G\) by increasing the
weights of some existing edges, adding new edges, or both. We encode such
an update by a graph \(H=(V,F,\omega)\) of the same type as \(G\), where
\[
F=\bigl\{f_\ell=\{i_\ell,j_\ell\}:\ell=1,\ldots,p\bigr\}
\]
is the set of updated edges and
$\omega:F\longrightarrow\mathbb R_{>0}$ assigns a positive weight 
to each  edge in $F$. We do not require $E$ and $F$ to be disjoint. 

For $i,j\in V$, define
\[
\omega_{ij}
:=
\begin{cases}
\omega(\{i,j\}),&\text{if }\{i,j\}\in F,\\
0,&\text{otherwise}.
\end{cases}
\]
  Corresponding to the indexing \(f_1,\ldots,f_p\) of the edges in \(F\),
we define the positive diagonal matrix
\begin{equation}
\label{eq:Omega-definition}
\Omega
:=
\operatorname{diag}
\bigl(\omega_{i_1j_1},\ldots,\omega_{i_pj_p}\bigr)
\in\mathbb R^{p\times p}.
\end{equation}
Its diagonal is  the \emph{associated vector of weights}. 
Define the updated weight function
\(\eta':E\cup F\longrightarrow\mathbb R_{>0}\) by
\[
\eta'(\{i,j\})
:=
\eta_{ij}+\omega_{ij},
\qquad
\{i,j\}\in E\cup F.
\]
The resulting graph is \(G+H:=(V,E\cup F,\eta')\).
 Thus, if
$\{i,j\}\in F\setminus E$, a new edge is added with weight $\omega_{ij}$,
whereas if $\{i,j\}\in E\cap F$, the weight of the existing edge is
increased by $\omega_{ij}$. Recall that we refer to either operation as a \emph{positive
edge update}.

For every update edge \(f_\ell=\{i_\ell,j_\ell\}\), choose
an ordering of its endpoints and define
\(b_\ell:=e_{i_\ell}-e_{j_\ell}\).
 Let
\begin{equation}
\label{eq:B-X-definitions}
B:=\begin{bmatrix}b_1&\cdots&b_p\end{bmatrix}
\in\mathbb R^{n\times p},
\qquad
X_{\Omega}:=B\Omega^{1/2}\in\mathbb R^{n\times p}.
\end{equation}

Thus, $B$ is an oriented incidence matrix of the underlying update graph
$H$, while $X_\Omega$ is its weighted incidence matrix. The $\ell$th
column of $X_\Omega$ is
$x_{\ell,\Omega}=\sqrt{\omega_{i_\ell j_\ell}}
b_{\ell}$, which is the weighted version of $b_{\ell}$. 

The Laplacian of the update graph $H$ is
\begin{equation}
\label{eq:update-laplacian}
L_H
=
B\Omega B^T
=
X_\Omega X_\Omega^T
=
\sum_{\ell=1}^p
\omega_{i_\ell j_\ell}
(e_{i_\ell}-e_{j_\ell})(e_{i_\ell}-e_{j_\ell})^T.
\end{equation}
Note that the choice of orientations does not affect the update: reversing the
orientation of an update edge changes the sign of the corresponding
column of \(B\) and of \(X_\Omega\), but leaves
\(X_\Omega X_\Omega^T\) unchanged.

Now the Laplacian of the updated graph $G+H$ is
\begin{equation}
\label{eq:updated-laplacian}
L_{G+H}
=
L_G+L_H
=
L_G+B\Omega B^T
=
L_G+X_\Omega X_\Omega^T.
\end{equation}
Thus, an edge update is a positive semidefinite perturbation of the
original Laplacian, with rank at most the number $p$ of updated edges.

We say that an eigenvalue $\lambda\in\sigma(L_G)$ is \emph{preserved} under
the positive edge update $L_H$  if $\lambda\in\sigma(L_{G+H})$. We define 
\begin{equation}
\label{def:Ulambda}
\mathcal U_\lambda
:=
E_\lambda(L_G)\cap E_\lambda(L_{G+H}).
\end{equation}
This subspace consists of the \(\lambda\)-eigenvectors of \(L_G\) that
remain \(\lambda\)-eigenvectors after the update, together with the zero
vector. In particular, \(\mathcal U_\lambda=\{0\}\) if
\(\lambda\notin\sigma(L_{G+H})\). The converse, however, need not hold:
the eigenvalue $\lambda$ may be preserved even when
$\mathcal U_\lambda=\{0\}$.

\section{Two mechanisms for eigenvalue preservation under positive edge updates} \label{sec:mechanisms}Fix $\lambda\in\sigma(L_G)$.
Our goal is to determine whether $\lambda$ is preserved under the update and, when it is, to describe the entire updated eigenspace $E_\lambda(L_{G+H})$ and understand how it relates to the original eigenspace $E_\lambda(L_G)$.

The eigenvalue $\lambda$ can be preserved through two different mechanisms. The first is \emph{inheritance}:  there is a nonzero vector $y\in E_\lambda(L_{G+H})$ that  is also a $\lambda$-eigenvector of $L_G$. Such a vector is called an \emph{inherited eigenvector}. The second mechanism is \emph{emergence}: the updated Laplacian has a $\lambda$-eigenvector that does not belong to the original $\lambda$-eigenspace. Thus, a nonzero vector $y\in E_\lambda(L_{G+H})$ is called an \emph{emergent eigenvector} if $y\notin E_\lambda(L_G)$. Every $\lambda$-eigenvector of $L_{G+H}$ is therefore either inherited or emergent, and $\lambda$ may be preserved through inheritance, emergence, or both mechanisms simultaneously. 

 The following examples illustrate that inheritance and emergence can occur at the same eigenvalue. 
They also show that emergence may happen only for particular choices of the update weights.
Later, we characterize these ``exceptional'' weights.

\begin{example}[Weight-dependent emergence after adding a new edge]\label{ex:motivating} Consider the positively weighted graph $G$ with vertex set $\{1,2,3,4,5\}$, edge set \[ E = \bigl\{ \{1,2\},\{1,4\},\{1,5\},\{2,3\} \bigr\}, \] and unit edge weights. Its Laplacian is 
\[ L_G = \begin{bmatrix} 3&-1&0&-1&-1\\ -1&2&-1&0&0\\ 0&-1&1&0&0\\ -1&0&0&1&0\\ -1&0&0&0&1 \end{bmatrix}. \] 
The eigenvalue $\lambda=1$ is simple, with $E_1(L_G)=\operatorname{span}\{y\}$, where \[ y= \begin{bmatrix} 0\\0\\0\\1\\-1 \end{bmatrix}. \] 
Let \(H\) be the graph with the same vertex set as \(G\), edge set
\(\{\{1,3\}\}\), and edge weight \(\omega>0\).  Let $b=e_1-e_3$, $\Omega=[\omega]$, and $X_{\Omega}=\sqrt{\omega}\,b$. Then 
\[ L_{G+H}= L_G+\omega bb^T = \begin{bmatrix} 3+\omega&-1&-\omega&-1&-1\\ -1&2&-1&0&0\\ -\omega&-1&1+\omega&0&0\\ -1&0&0&1&0\\ -1&0&0&0&1 \end{bmatrix}. \] 
Since $b^Ty=0$, we have $L_{G+H} y=L_Gy=y$. Thus, $y$ is an inherited $1$-eigenvector of $L_{G+H}$ for every $\omega>0$. Now consider \[ \widetilde y = \begin{bmatrix} 0\\-1\\-1\\1\\1 \end{bmatrix}. \] A direct calculation gives $L_G\widetilde y=\widetilde y-b$ and $b^T\widetilde y=1$. Consequently, $L_{G+H}\widetilde y=\widetilde y+(\omega-1)b$, and therefore $L_{G+H}\widetilde y=\widetilde y$ if and only if $\omega=1$. In fact, we can show that 
\[ E_1(L_{G+H}) = \begin{cases} \operatorname{span}\{y\},&\omega\neq1,\\[1mm] \operatorname{span}\{y,\widetilde y\},&\omega=1. \end{cases} \] Thus, inheritance occurs for every $\omega>0$, whereas emergence occurs only at the exceptional weight $\omega=1$. At this value, the multiplicity of the eigenvalue $1$ increases from $1$ to $2$: the direction spanned by $y$ is inherited, while the direction spanned by $\widetilde y$ is emergent. Moreover, every vector $\alpha y+\beta\widetilde y$ with $\beta\neq0$ is an emergent $1$-eigenvector of $L_{G+H}$. \end{example} 

Example~\ref{ex:motivating} shows how adding a new edge can produce emergence. The next example shows that emergence may also result from increasing the weight of an edge already present in the graph.

\begin{example}[Weight-dependent emergence
after increasing the weight of an existing edge] \label{ex:emergence-existing-edge} Consider the positively weighted graph $G$ with vertex set $\{1,2,3,4\}$, edge set \[ E = \bigl\{ \{1,2\},\{1,3\},\{1,4\},\{2,3\} \bigr\},\] 
and unit edge weights.
Let $L_G$ be its Laplacian. A direct calculation gives $\sigma(L_G)=\{0,1,3,4\}$, $E_4(L_G)=\operatorname{span}\{y\}$, where \[ y= \begin{bmatrix} -3\\1\\1\\1 \end{bmatrix}, \] and  \(E_3(L_G)=\operatorname{span}\{\widetilde y\}\), where \[ \widetilde y = \begin{bmatrix} 0\\-1\\1\\0 \end{bmatrix}. \] 

Let $H$ be the graph with the same vertex set as $G$, edge set $\{\{2,3\}\}$, and edge weight $\omega>0$. Note that the edge $\{2,3\}$ is already present in $G$. Let $b=e_2-e_3$, $\Omega=[\omega]$, and $L_{G+H}=L_G+\omega bb^T$. Since \(b^Ty=0\), we have
\[
L_{G+H}y
=
L_Gy+\omega b(b^Ty)
=
L_Gy
=
4y.
\]
Thus, \(y\) is an inherited \(4\)-eigenvector for every \(\omega>0\). On the other hand, \(b=-\widetilde y\), and hence
\(
bb^T\widetilde y=2\widetilde y.
\)
Therefore,
\[
L_{G+H}\widetilde y
=
L_G\widetilde y+\omega bb^T\widetilde y
=
(3+2\omega)\widetilde y.
\]

Because \(L_G\) is symmetric, eigenspaces corresponding to distinct
eigenvalues are orthogonal. Since \(b\) spans \(E_3(L_G)\), every vector
\(z\) in \(E_0(L_G)\), \(E_1(L_G)\), or \(E_4(L_G)\) satisfies
\(b^Tz=0\). Hence,
\[
L_{G+H}z
=
L_Gz+\omega b(b^Tz)
=
L_Gz.
\]
Thus, all eigenvectors of \(L_G\) corresponding to the eigenvalues
\(0\), \(1\), and \(4\) remain eigenvectors of \(L_{G+H}\) with the
same eigenvalues. The only original eigendirection affected by the
update is the direction spanned by \(\widetilde y\), whose eigenvalue
changes from \(3\) to \(3+2\omega\).
Consequently,
\[
E_4(L_{G+H})
=
\begin{cases}
\operatorname{span}\{y\},&\omega\neq\frac12,\\[1mm]
\operatorname{span}\{y,\widetilde y\},&\omega=\frac12.
\end{cases}
\]
Therefore, \(y\) is inherited for every \(\omega>0\), whereas emergence
occurs only when \(\omega=1/2\). At that value, \(\widetilde y\) is an
emergent \(4\)-eigenvector.

This shows that emergence can occur by increasing the weight of an  existing edge, without adding a new edge to the graph. 
The example also shows that an emergent $\lambda$-eigenvector need not be a completely new eigendirection of the original Laplacian: $\widetilde y$ is already an eigenvector of $L_G$, but for the different eigenvalue $3$. 
In general, however, an emergent eigenvector may have components in several eigenspaces of $L_G$.\end{example} 

The preceding examples show that inheritance and emergence can occur simultaneously. 
They also show that the existence of emergent eigenvectors may depend on the positive update weights, both when a
new edge is added and when the weight of an existing edge is increased. 
We now characterize inheritance and emergence in terms of the differences across the update edges.

For $y\in\mathbb R^n$, define its \emph{weighted edge-difference vector} by 
\begin{equation} \label{eq:weighted-edge-difference-vector}  X_\Omega^Ty = \Omega^{1/2}B^Ty. \end{equation}

 Its $\ell$th entry is $\sqrt{\omega_{i_\ell j_\ell}} (y_{i_\ell}-y_{j_\ell})$. Since every update weight is positive, \(X_{\Omega}^Ty=0\) if and only
if \(B^Ty=0\); equivalently, the entries of \(y\) agree at the endpoints
of every updated edge.

 \begin{theorem}[Inheritance--emergence dichotomy] \label{thm:inheritance-emergence-dichotomy} Let $\lambda\in\sigma(L_G)$ and suppose that  $0\neq y\in E_\lambda(L_{G+H})$. Then \(y\) is inherited if and only if \(X_\Omega^Ty=0\), or,
equivalently, \(B^Ty=0\). Hence, \(y\) is emergent if and only if
\(X_\Omega^Ty\neq0\), or, equivalently, \(B^Ty\neq0\).
 \end{theorem}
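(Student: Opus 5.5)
The plan is to work directly from the update identity \eqref{eq:updated-laplacian}, $L_{G+H}=L_G+X_\Omega X_\Omega^T$, and compare the two eigen-equations. Since $\Omega$ is a positive diagonal matrix, $\Omega^{1/2}$ is invertible, so $X_\Omega^Ty=\Omega^{1/2}B^Ty$ vanishes if and only if $B^Ty=0$. This already gives the equivalence between the two formulations in the statement. It therefore suffices to show that, for $0\neq y\in E_\lambda(L_{G+H})$, $y\in E_\lambda(L_G)$ holds exactly when $X_\Omega^Ty=0$. The statement about emergent vectors is then the contrapositive, because by definition a nonzero updated eigenvector is emergent precisely when it is not inherited.

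For the direction ``$X_\Omega^Ty=0\Rightarrow$ inherited'', rewrite the hypothesis $L_{G+H}y=\lambda y$ as
\[
L_Gy=\lambda y-X_\Omega X_\Omega^Ty .
\]
If $X_\Omega^Ty=0$, then $L_Gy=\lambda y$, so $y\in E_\lambda(L_G)$ and $y$ is inherited.

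For the converse, assume $y$ is inherited, so $L_Gy=\lambda y$. Subtracting this from $L_{G+H}y=\lambda y$ gives $X_\Omega X_\Omega^Ty=0$. To pass from $X_\Omega X_\Omega^Ty=0$ to $X_\Omega^Ty=0$, take the inner product with $y$:
\[
0=y^TX_\Omega X_\Omega^Ty=\|X_\Omega^Ty\|^2 .
\]
Hence $X_\Omega^Ty=0$. This uses the positive semidefinite structure of the update. The same fact can be read off from \eqref{eq:update-laplacian}, since $y^TL_Hy=\sum_\ell\omega_{i_\ell j_\ell}(y_{i_\ell}-y_{j_\ell})^2$ and every weight is positive.

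There is no substantial obstacle. The only point needing care is the step $X_\Omega X_\Omega^T y=0\Rightarrow X_\Omega^Ty=0$, which is not immediate unless one uses the quadratic-form identity $\ker(MM^T)=\ker(M^T)$, and this is exactly where positivity of the weights enters.
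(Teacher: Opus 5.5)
Your proof is correct and follows the paper's argument essentially step for step. You use $L_{G+H}=L_G+X_\Omega X_\Omega^T$ for the forward direction, $\ker(X_\Omega X_\Omega^T)=\ker(X_\Omega^T)$ for the converse, and invertibility of $\Omega^{1/2}$ for the equivalence with $B^Ty=0$; the only difference is that you justify the kernel identity explicitly via $y^TX_\Omega X_\Omega^Ty=\|X_\Omega^Ty\|^2$, which the paper simply cites.
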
 
 
 \begin{proof} Suppose first that $X_\Omega^T y=0$. Since $L_{G+H}=L_G+X_\Omega X_\Omega^T$ and $L_{G+H} y=\lambda y$, we obtain \[ (L_G-\lambda I_n)y = (L_{G+H}-\lambda I_n)y-X_\Omega (X_\Omega^Ty)  = 0. \] Thus, $y\in E_\lambda(L_G)$, and hence $y$ is inherited.
 
 Conversely, suppose that $y$ is inherited. Then $L_Gy=L_{G+H} y=\lambda y$, so $X_\Omega X_\Omega^Ty=(L_{G+H}-L_G)y=0$. Since $\ker(X_{\Omega}X_{\Omega}^T)= \ker (X_{\Omega}^T)$, we have $X_{\Omega}^Ty=0$.  Finally, since $X_\Omega^T=\Omega^{1/2}B^T$ and $\Omega^{1/2}$ is invertible, we have $X_\Omega^Ty=0$ if and only if $B^Ty=0$. This completes the proof of the characterization of inherited eigenvectors.
 
 The characterization of emergent eigenvectors is the negation of the inherited condition for a nonzero vector in $E_\lambda(L_{G+H})$. \end{proof} 

\section{The inherited eigenspace}
\label{sec:inherited-eigenspace}

Theorem~\ref{thm:inheritance-emergence-dichotomy} shows that $X_\Omega^Ty$ distinguishes inherited from emergent eigenvectors $y$.
This motivates the use of the following linear map:
\begin{equation}
\label{eq:inheritance-edge-difference-map}
T_{\lambda,\Omega}
:
E_\lambda(L_{G+H})\longrightarrow\mathbb R^p, \qquad T_{\lambda, \Omega}(y)=X_{\Omega}^Ty.
\end{equation}
We call $T_{\lambda,\Omega}$ the edge-difference map. 
By Theorem~\ref{thm:inheritance-emergence-dichotomy}, the nonzero vectors in its kernel are precisely the inherited $\lambda$-eigenvectors, that is,
\[
\mathcal U_\lambda=\ker(T_{\lambda,\Omega}).
\]

 Next, we give some equivalent characterizations of  $\mathcal U_\lambda$, which
 tell exactly which original $\lambda$-eigenvectors remain
$\lambda$-eigenvectors after the update.
We study this inheritance mechanism before turning to emergent eigenvectors.
Our result extends the eigenvector-preservation mechanism in Merris's Edge Principle~\cite{Merris1998} to multiple positively weighted edge updates.

\begin{proposition}[Inherited eigenvectors]\label{lem:inherited-eigenvectors}
Let \(\lambda\in\sigma(L_{G})\). For \(0\neq y\in E_\lambda(L_{G+H})\), the following
statements are equivalent:
\begin{enumerate}
\item[\rm(i)]  \(y\) is inherited.

\item[\rm(ii)] 
\(y\in\ker(B^T)\); that is, \(y_i=y_j\) for every update edge
\(\{i,j\}\in F\).

\item[\rm(iii)] The entries of \(y\) are constant on each connected
component of \(H\).
\end{enumerate}
\end{proposition}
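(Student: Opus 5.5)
The plan is to obtain (i)$\Leftrightarrow$(ii) directly from Theorem~\ref{thm:inheritance-emergence-dichotomy}, and then prove (ii)$\Leftrightarrow$(iii) as a purely combinatorial statement about the update graph $H$. Throughout, fix $0\neq y\in E_\lambda(L_{G+H})$.

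For (i)$\Leftrightarrow$(ii), Theorem~\ref{thm:inheritance-emergence-dichotomy} says that $y$ is inherited if and only if $B^Ty=0$. Since the $\ell$th entry of $B^Ty$ is $b_\ell^Ty=y_{i_\ell}-y_{j_\ell}$, the condition $B^Ty=0$ says exactly that $y_i=y_j$ for every $\{i,j\}\in F$. This gives the equivalence immediately.

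For (ii)$\Leftrightarrow$(iii), I will use the fact that $L_H=B\Omega B^T$ is the Laplacian of $H$.

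\textbf{(iii)$\Rightarrow$(ii).} Each update edge $\{i,j\}\in F$ has both endpoints in the same connected component of $H$. If $y$ is constant on each component, then $y_i=y_j$.

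\textbf{(ii)$\Rightarrow$(iii).} There are two routes.
\begin{enumerate}
\item[\rm(a)] Combinatorial route. Take any two vertices $u,v$ in the same component of $H$, and choose a path $u=v_0,v_1,\ldots,v_k=v$ whose consecutive pairs are edges of $F$. Condition (ii) gives $y_{v_{t-1}}=y_{v_t}$ for each $t$, and chaining these equalities gives $y_u=y_v$. This covers isolated vertices of $H$ trivially, since they form singleton components.
\item[\rm(b)] Algebraic route. From $B^Ty=0$ we get $L_Hy=B\Omega B^Ty=0$, so $y\in\ker(L_H)$. The preliminaries (Subsection~\ref{sec:positivegraph}), applied to the positively weighted graph $H$, show that $\ker(L_H)$ consists exactly of the vectors constant on each connected component of $H$.
\end{enumerate}

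I expect no real obstacle. The only point needing care is that $H$ has vertex set $V$ and may have isolated vertices and several components. Statement (iii) must therefore be read componentwise, with singleton components imposing no constraint; both routes above handle this automatically.
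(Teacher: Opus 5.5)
Your proposal is correct and matches the paper's proof: (i)$\Leftrightarrow$(ii) comes directly from Theorem~\ref{thm:inheritance-emergence-dichotomy}, and your route (a) for (ii)$\Leftrightarrow$(iii) is the same path-chaining argument the paper uses. Your route (b) via $\ker(L_H)$ is also valid, though the kernel characterization it quotes from the preliminaries ultimately rests on that same path argument.
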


\begin{proof}
Since $X_{\Omega}^Ty=0$ if and only if $B^Ty=0$, the equivalence of (i) and (ii) follows from Theorem~\ref{thm:inheritance-emergence-dichotomy}.

The entries of \(y\) agree at the endpoints of every update edge if and only if 
they agree along every path in the update graph (that is,  \(y\) is constant
on each connected component of \(H\)). 
This proves the equivalence of \emph{(ii)} and
\emph{(iii)}.
\end{proof}

We now use the preceding characterization to describe the entire inherited eigenspace and compute its dimension.

\begin{theorem}[The inherited eigenspace]
\label{thm:inherited-eigenspace}
  Let \(\lambda\in\sigma(L_G)\), let \(m=m_\lambda(L_G)\), and let
\(Q_\lambda\in\mathbb R^{n\times m}\) have orthonormal columns spanning
\(E_\lambda(L_G)\). Then the following equalities hold:
\begin{equation}
\label{eq:inherited-space-characterizations}
\mathcal U_\lambda
=
\ker(T_{\lambda,\Omega})
=
E_\lambda(L_G)\cap\ker(B^T)
=
\{Q_\lambda a:a\in\ker(B^TQ_\lambda)\}.
\end{equation}
In particular, the following are equivalent:
\begin{enumerate}
\item[(i)] $\mathcal{U}_{\lambda} \neq \{0\}$, that is, $\lambda$  is preserved through inheritance;
\item[(ii)] $B^TQ_{\lambda}$ does not have full column rank;
\item[(iii)] $Q_{\lambda}^TB\Omega B^T Q_{\lambda}$ is singular. 
\end{enumerate}
\end{theorem}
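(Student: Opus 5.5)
The proof is a chain of set equalities followed by elementary rank facts. The first equality, \(\mathcal U_\lambda=\ker(T_{\lambda,\Omega})\), was already noted after \eqref{eq:inheritance-edge-difference-map} as a consequence of Theorem~\ref{thm:inheritance-emergence-dichotomy}. For the second, we show \(\mathcal U_\lambda=E_\lambda(L_G)\cap\ker(B^T)\) by double inclusion.

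If \(y\in\mathcal U_\lambda\), then \(y\in E_\lambda(L_G)\) by definition \eqref{def:Ulambda}. Moreover \(y\in\ker(T_{\lambda,\Omega})\), so \(X_\Omega^Ty=0\), and hence \(B^Ty=0\) because \(\Omega^{1/2}\) is invertible. Conversely, suppose \(y\in E_\lambda(L_G)\) with \(B^Ty=0\). By \eqref{eq:updated-laplacian},
\[
L_{G+H}y=L_Gy+B\Omega B^Ty=\lambda y,
\]
so \(y\in E_\lambda(L_{G+H})\) as well, and therefore \(y\in\mathcal U_\lambda\). This converse direction is the one point that needs a genuine argument. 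Theorem~\ref{thm:inheritance-emergence-dichotomy} only classifies vectors already known to lie in \(E_\lambda(L_{G+H})\), so membership in the updated eigenspace has to be verified directly, as above.

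For the third equality, note that every \(y\in E_\lambda(L_G)\) can be written uniquely as \(y=Q_\lambda a\) with \(a\in\mathbb R^m\), since the columns of \(Q_\lambda\) form an orthonormal basis. Then \(B^Ty=0\) holds exactly when \(B^TQ_\lambda a=0\), which gives \(E_\lambda(L_G)\cap\ker(B^T)=\{Q_\lambda a:a\in\ker(B^TQ_\lambda)\}\). The map \(a\mapsto Q_\lambda a\) is injective, so \(\dim\mathcal U_\lambda=\dim\ker(B^TQ_\lambda)\).

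The equivalences follow from this. Statement (i) says \(\mathcal U_\lambda\neq\{0\}\), which is equivalent to \(\ker(B^TQ_\lambda)\neq\{0\}\). That is precisely the statement that \(B^TQ_\lambda\) does not have full column rank, so (i) and (ii) are equivalent.

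For (ii) and (iii), set \(M=Q_\lambda^TB\Omega B^TQ_\lambda\), an \(m\times m\) matrix. We have
\[
M=(\Omega^{1/2}B^TQ_\lambda)^T(\Omega^{1/2}B^TQ_\lambda),
\]
so \(a^TMa=\|\Omega^{1/2}B^TQ_\lambda a\|^2\) for every \(a\in\mathbb R^m\). Hence \(\ker M=\ker(\Omega^{1/2}B^TQ_\lambda)\), and since \(\Omega^{1/2}\) is invertible this equals \(\ker(B^TQ_\lambda)\). Thus \(M\) is singular exactly when \(B^TQ_\lambda\) has a nontrivial kernel, which proves that (ii) and (iii) are equivalent.
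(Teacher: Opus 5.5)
Your proof is correct and follows the paper's argument. It uses the same chain of equalities via Theorem~\ref{thm:inheritance-emergence-dichotomy}, the coordinate map \(a\mapsto Q_\lambda a\), and the factorization \(Q_\lambda^TB\Omega B^TQ_\lambda=A^TA\) with \(A=\Omega^{1/2}B^TQ_\lambda\). In addition, you explicitly verify that \(E_\lambda(L_G)\cap\ker(B^T)\subseteq E_\lambda(L_{G+H})\), which fills in a step the paper leaves implicit when it attributes the second equality to Proposition~\ref{lem:inherited-eigenvectors}, since that proposition only classifies vectors already known to lie in \(E_\lambda(L_{G+H})\).
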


\begin{proof}
    The first and second characterizations of $\mathcal{U}_{\lambda}$ in \eqref{eq:inherited-space-characterizations}  follow from Theorem \ref{thm:inheritance-emergence-dichotomy} and Proposition  \ref{lem:inherited-eigenvectors}, respectively. The third equality in \eqref{eq:inherited-space-characterizations} follows since    every vector in \(E_\lambda(L_G)\) has the
form \(Q_\lambda a\) for a unique \(a\in\mathbb R^m\), and  $Q_{\lambda}$ has full column rank. 

The equivalence of (i) and (ii) follows from the last characterization of $\mathcal{U}_{\lambda}$ in \eqref{eq:inherited-space-characterizations} taking into account that  $Q_\lambda$ has full column rank.

Finally, let $A:=\Omega^{1/2}B^TQ_\lambda$.
Then
\[
Q_\lambda^TB\Omega B^TQ_\lambda
=
\bigl(\Omega^{1/2}B^TQ_{\lambda}\bigr)^T
\bigl(\Omega^{1/2}B^TQ_{\lambda}\bigr)
=
A^TA.
\]
Since $\Omega^{1/2}$ is invertible, $A$ has full column rank if and only if $B^TQ_\lambda$ has full column rank. Moreover, $A^TA$ is nonsingular if and only if $A$ has full column rank. This justifies the equivalence of
\textup{(ii)} and \textup{(iii)}.
\end{proof}

The dimension of the inherited eigenspace immediately follows from \eqref{eq:inherited-space-characterizations}.

\begin{corollary}[Dimension and preservation through inheritance]
\label{cor:inherited-dimension}
Let \(\lambda\in\sigma(L_G)\), let \(m=m_\lambda(L_G)\), and let
\(Q_\lambda\in\mathbb R^{n\times m}\) have orthonormal columns spanning
\(E_\lambda(L_G)\). Then
\begin{equation}
\label{eq:inherited-dimension}
\dim\mathcal U_\lambda
=
m_\lambda(L_G)-\rank(B^TQ_\lambda).
\end{equation}
In particular, if the number $p$ of updated edges is smaller than \(m_\lambda(L_G)\),  then \(\lambda\) is necessarily
preserved through inheritance.
\end{corollary}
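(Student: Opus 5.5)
The plan is to read off the dimension from the last characterization in \eqref{eq:inherited-space-characterizations}, namely
\[
\mathcal U_\lambda=\{Q_\lambda a:\ a\in\ker(B^TQ_\lambda)\}.
\]
I will regard this set as the image of the subspace $\ker(B^TQ_\lambda)\subseteq\mathbb R^m$ under the linear map $a\mapsto Q_\lambda a$. Because $Q_\lambda$ has orthonormal columns, it has full column rank, so this map is injective. Consequently $\dim\mathcal U_\lambda=\dim\ker(B^TQ_\lambda)$.

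Next I apply rank--nullity to the matrix $B^TQ_\lambda\in\mathbb R^{p\times m}$. This gives $\dim\ker(B^TQ_\lambda)=m-\rank(B^TQ_\lambda)$. Since $m=m_\lambda(L_G)$, combining this with the previous step yields \eqref{eq:inherited-dimension}.

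For the final assertion, I note that $B^TQ_\lambda$ has only $p$ rows, so $\rank(B^TQ_\lambda)\le p$. If $p<m_\lambda(L_G)$, then
\[
\dim\mathcal U_\lambda\ge m_\lambda(L_G)-p>0.
\]
Hence $\mathcal U_\lambda\neq\{0\}$, which is condition (i) of Theorem~\ref{thm:inherited-eigenspace}. Equivalently, $B^TQ_\lambda$ cannot have full column rank, which is condition (ii) of that theorem.

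None of these steps is a real obstacle. The only point that needs care is the injectivity of $a\mapsto Q_\lambda a$, and it follows immediately from $Q_\lambda^TQ_\lambda=I_m$.
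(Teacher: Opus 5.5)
Your proposal is correct and follows the paper's proof step for step. Injectivity of $a\mapsto Q_\lambda a$ gives $\dim\mathcal U_\lambda=\dim\ker(B^TQ_\lambda)$, rank--nullity gives the formula, and the bound $\rank(B^TQ_\lambda)\le p<m_\lambda(L_G)$ gives $\mathcal U_\lambda\neq\{0\}$.
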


\begin{proof}
Since $Q_{\lambda}$ has orthonormal columns, the map $a\mapsto Q_{\lambda}a$ is injective.
Therefore, by \eqref{eq:inherited-space-characterizations},
\(
\dim\mathcal U_\lambda=\dim\ker(B^TQ_{\lambda}).
\)
The rank--nullity theorem gives
\eqref{eq:inherited-dimension}.

Finally, if \(p<m_\lambda(L_G)\), then
\(
\rank(B^TQ_\lambda)\leq p<m_\lambda(L_G).
\)
It follows from \eqref{eq:inherited-dimension} that
\(\dim\mathcal U_\lambda>0\), so \(\lambda\) is preserved through
inheritance.
\end{proof}

Next, we make some interesting observations.

\begin{remark}[Independence from the positive update weights]
\label{rem:inheritance-weight-independence}
For a fixed set \(F\) of updated edges, the inherited eigenspace
$\mathcal U_\lambda$, and therefore its dimension, is independent of
the positive update weights in $\Omega$.
This contrasts with emergence, which may depend on \(\Omega\), as shown in
Examples~\ref{ex:motivating} and~\ref{ex:emergence-existing-edge}.
\end{remark}

\begin{remark}[Eigenvalue preservation under a single-edge update always involves inheritance]
\label{rem:single-edge-inheritance}
Suppose that the update consists of a single edge. Thus,
\(p=1\), \(B=b\in\mathbb R^n\), and
\(\Omega=[\omega]\) with \(\omega>0\), so that
\[
L_{G+H}=L_G+\omega bb^T.
\]
If
\(
\lambda\in\sigma(L_G)\cap\sigma(L_{G+H}),
\)
then necessarily
\(
\mathcal U_\lambda\neq\{0\},
\)
as we show next.
Hence, under a single positive edge update, every preserved eigenvalue
has a nonzero inherited eigenvector. This does not rule out the
simultaneous presence of emergent eigenvectors; it only shows that
a preserved eigenvalue cannot be preserved solely through emergence.

To see this, suppose, to the contrary, that
\(\mathcal U_\lambda=\{0\}\).
Choose any nonzero vectors
\(
x\in E_\lambda(L_G),
\) and \(
y\in E_\lambda(L_{G+H}).
\)
By Theorem~\ref{thm:inherited-eigenspace},
\(
\mathcal U_\lambda
=
E_\lambda(L_G)\cap\ker(b^T),
\)
so \(b^Tx\neq0\).
Since \(y\in E_\lambda(L_{G+H})\), we have
\begin{equation}\label{eq:eigenvalueeq}
(L_G-\lambda I_n)y
=
-\omega b(b^Ty).
\end{equation}
Because \(L_G\) is symmetric and \(x\in E_\lambda(L_G)\),
\(
x^T(L_G-\lambda I_n)
=
\bigl((L_G-\lambda I_n)x\bigr)^T
=
0.
\)
Multiplying \eqref{eq:eigenvalueeq} on the left by \(x^T\)
therefore gives
\[
0=-\omega(x^Tb)(b^Ty).
\]
Since \(\omega>0\) and \(x^Tb\neq0\), it follows that
\(b^Ty=0\).
Substituting this into \eqref{eq:eigenvalueeq} yields
\(
(L_G-\lambda I_n)y=0.
\)
Thus,
\[
0\neq y\in E_\lambda(L_G)\cap\ker(b^T)
=\mathcal U_\lambda,
\]
a contradiction.
Therefore, \(\mathcal U_\lambda\neq\{0\}\).
In particular, preservation solely through emergence requires at least
two updated edges.
\end{remark}

\begin{remark}[Geometric interpretation]
\label{rem:inheritance-geometric-interpretation}
Let \(b_1,\ldots,b_p\) be the columns of \(B\), and recall that
\(P_\lambda=Q_\lambda Q_\lambda^T\) is the orthogonal projector onto
\(E_\lambda(L_G)\). The vectors \(Q_\lambda^Tb_\ell\) are the coordinates of
\(P_\lambda b_\ell\) with respect to the orthonormal basis formed by the
columns of \(Q_\lambda\). Hence,
\begin{equation}
\label{eq:projected-edge-rank}
\rank(B^TQ_\lambda)=\rank(Q_{\lambda}^TB)=
\dim\left(
\operatorname{span}
\{Q_{\lambda}^Tb_1,\ldots,Q_{\lambda}^Tb_p\}
\right)
=
\dim\operatorname{span}
\{P_\lambda b_1,\ldots,P_\lambda b_p\}.
\end{equation}
If \(y\in E_\lambda(L_G)\), then \(P_\lambda y=y\). Since
\(P_\lambda\) is symmetric,
\(
b_\ell^Ty
=
b_\ell^TP_\lambda y
=
\left(P_\lambda b_\ell\right)^Ty
\). Therefore,
\begin{equation}
\label{eq:inherited-geometric-description}
\mathcal U_\lambda=E_{\lambda}(L_G)\cap \ker(B^T)
=
E_\lambda(L_G)\cap
\operatorname{span}
\{P_\lambda b_1,\ldots,P_\lambda b_p\}^{\perp}.
\end{equation}
Thus, \(\rank(B^TQ_\lambda)\) is the codimension of the inherited subspace
inside \(E_\lambda(L_G)\). In other words, it is the number of independent directions removed from the original $\lambda$-eigenspace by the update.
In particular, \(\lambda\) is preserved through inheritance if and only if
the projected update-edge vectors do not span \(E_\lambda(L_G)\).
\end{remark}

\medskip

\medskip

The following result gives a sufficient condition under which a preserved
eigenvalue has no emergent directions.

\begin{theorem}
\label{th1}
Let $L_G$ and $L_{G+H}=L_G+X_{\Omega}X_{\Omega}^{T}$ be the Laplacian matrices of $G$ and of an
updated graph of $G$, respectively. 
Let
\(\lambda\in\sigma(L_G)\cap\sigma(L_{G+H})\). Suppose that
\(
E_\mu(L_G)\subseteq E_\mu(L_{G+H})
\)
for every \(\mu\in\sigma(L_G)\) with \(\mu<\lambda\). Then 
\(
E_\lambda(L_{G+H})=\mathcal U_\lambda.
\)
\end{theorem}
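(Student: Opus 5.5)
The plan is to take an arbitrary $y\in E_\lambda(L_{G+H})$ and show directly that $X_\Omega^Ty=0$. Theorem~\ref{thm:inheritance-emergence-dichotomy} then gives $y\in\mathcal U_\lambda$, and the inclusion $\mathcal U_\lambda\subseteq E_\lambda(L_{G+H})$ holds by definition.

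First, decompose $y$ along the spectral decomposition \eqref{eq:spectral-decomposition-L} of $L_G$:
\[
y=y_{<}+y_\lambda+y_{>},
\]
where $y_<=\sum_{\mu<\lambda}P_\mu y$, $y_\lambda=P_\lambda y$, and $y_>=\sum_{\mu>\lambda}P_\mu y$.

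Next, examine what the hypothesis says. If $z\in E_\mu(L_G)$ with $\mu<\lambda$, then $z\in E_\mu(L_{G+H})$, so $X_\Omega X_\Omega^Tz=(L_{G+H}-L_G)z=0$, and hence $X_\Omega^Tz=0$. Therefore $X_\Omega^Ty_<=0$, and so $X_\Omega^Ty=X_\Omega^T(y_\lambda+y_>)$.

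Now I would use an orthogonality argument. Since $L_{G+H}$ is symmetric, the subspace
\[
W:=\bigoplus_{\mu<\lambda}E_\mu(L_G)\subseteq \bigoplus_{\mu<\lambda}E_\mu(L_{G+H})
\]
consists of eigenvectors of $L_{G+H}$ with eigenvalues different from $\lambda$. So $W\perp E_\lambda(L_{G+H})$, and in particular $y_<=0$. Thus $y=y_\lambda+y_>$.

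The key step is to evaluate the Rayleigh-type quantity $y^T(L_{G+H}-\lambda I)y=0$:
\[
0=y^T(L_G-\lambda I)y+\|X_\Omega^Ty\|^2=\sum_{\mu>\lambda}(\mu-\lambda)\|P_\mu y\|^2+\|X_\Omega^Ty\|^2 .
\]
Both terms on the right are nonnegative, so each must vanish. Hence $P_\mu y=0$ for all $\mu>\lambda$, that is, $y_>=0$, and also $X_\Omega^Ty=0$. This already finishes the proof, since $y=y_\lambda\in E_\lambda(L_G)$ with $X_\Omega^Ty=0$, which means $y\in\mathcal U_\lambda$.

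The main obstacle is the step showing $y_<=0$. It relies on recognizing that the preserved lower eigenspaces of $L_G$ remain eigenspaces of $L_{G+H}$ for eigenvalues $\mu\neq\lambda$, so that symmetry of $L_{G+H}$ forces orthogonality to $E_\lambda(L_{G+H})$. Once $y$ has no component below $\lambda$, the positive semidefiniteness of $X_\Omega X_\Omega^T$ combined with the sign of $L_G-\lambda I$ on the components above $\lambda$ does the rest. A small point to handle is the case where no $\mu<\lambda$ exists (for example $\lambda=0$); there the hypothesis is vacuous, $W=\{0\}$, and the same computation applies unchanged.
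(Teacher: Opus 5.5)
Your proof is correct and follows the paper's argument: symmetry of $L_{G+H}$ together with the hypothesis gives $y\perp\bigoplus_{\mu<\lambda}E_\mu(L_G)$, and nonnegativity of the remaining terms then forces $\|X_\Omega^Ty\|^2=0$. The only differences are that you expand $y^T(L_G-\lambda I)y=\sum_{\mu>\lambda}(\mu-\lambda)\|P_\mu y\|^2$ explicitly where the paper cites the Courant--Fischer bound $y^TL_Gy\ge\lambda$ for $y\perp S$, and that your preliminary step $X_\Omega^Ty_<=0$ is redundant once you show $y_<=0$.
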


\begin{proof}
Recall that $0$ is the smallest eigenvalue of $L_G$ and $L_{G+H}$. 
If $\lambda=0$,  we set $S=S'=\emptyset$. We use the convention that any vector is orthogonal to the empty set. 
For $\lambda >0$, define
\[
S:=\bigoplus_{\substack{\mu\in\sigma(L_{G})\\\mu<\lambda}}E_{\mu}(L_{G})\qquad\text{and}\qquad S^{\prime}:=\bigoplus_{\substack{\nu\in\sigma
(L_{G+H})\\\nu<\lambda}}E_{\nu}(L_{G+H}).
\]
Note that  $S\subseteq S^{\prime}$. Let $y$ be a unit vector in
$E_{\lambda}(L_{G+H})$ and let $C=X_{\Omega}X_{\Omega}^{T}$. Since $L_{G+H}$ is symmetric, we have $y\perp S^{\prime}$,
and therefore $y\perp S$. Then,\[
\lambda=\min_{x\bot S\text{,}\left\Vert x\right\Vert _{2}=1}(x^{T}L_Gx)\leq
y^{T}L_{G}y.
\]
Therefore,
\[
y^{T}L_{G}y+y^{T}Cy=y^{T}L_{G+H}y=\lambda\leq y^{T}L_{G}y.
\]
Since $C$ is positive semidefinite, we obtain $y^{T}Cy=0$. Since
$y^{T}Cy=\Vert X_{\Omega}^{T}y\Vert_{2}^{2}$, we have $X_{\Omega}^{T}y=0$.
Consequently,
\[
L_{G}y=L_{G+H}y-X_{\Omega}X_{\Omega}^{T}y=\lambda y.
\]
Thus $y\in E_{\lambda}(L_{G})$.
\end{proof}

\begin{corollary}
Let $L_G$ and $L_{G+H}=L_G+X_{\Omega}X_{\Omega}^{T}$ be the Laplacian matrices of $G$ and of an
updated graph of $G$, respectively. If $\lambda$ is one of the two smallest
eigenvalues (counting multiplicities) of $L_G$ and $\lambda\in\sigma(L_{G+H})$, then any $\lambda
$-eigenvector of $L_{G+H}$ is inherited.
\end{corollary}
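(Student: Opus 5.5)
The plan is to deduce the corollary from Theorem~\ref{th1} by checking its hypothesis: every eigenspace of $L_G$ for an eigenvalue $\mu<\lambda$ must be contained in the corresponding eigenspace of $L_{G+H}$. Once that holds, Theorem~\ref{th1} gives $E_\lambda(L_{G+H})=\mathcal U_\lambda$. Then every nonzero $\lambda$-eigenvector of $L_{G+H}$ lies in $E_\lambda(L_G)$, so it is inherited.

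First I list the possible values of $\lambda$. Let $\lambda_1\le\lambda_2$ be the two smallest eigenvalues of $L_G$, counted with multiplicity. Since $L_G$ is positive semidefinite and $L_G\mathbf 1_n=0$, we have $\lambda_1=0$, and $\lambda\in\{0,\lambda_2\}$.

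\emph{Case $\lambda=0$.} There is no $\mu\in\sigma(L_G)$ with $\mu<0$, so the hypothesis of Theorem~\ref{th1} holds vacuously. This is the case $S=S'=\emptyset$ in its proof.

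\emph{Case $\lambda=\lambda_2>0$.} The only eigenvalue of $L_G$ below $\lambda$ is $\mu=0$. It has multiplicity one, because $\lambda_2>0$ means $0$ appears only once among the ordered eigenvalues. So $G$ is connected and $E_0(L_G)=\operatorname{span}\{\mathbf 1_n\}$. Every column of $B$ is a difference $e_i-e_j$, so $B^T\mathbf 1_n=0$ and $X_\Omega^T\mathbf 1_n=0$. Therefore $L_{G+H}\mathbf 1_n=L_G\mathbf 1_n=0$, which gives $E_0(L_G)\subseteq E_0(L_{G+H})$. This is exactly the hypothesis of Theorem~\ref{th1}.

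\emph{Case $\lambda=\lambda_2=0$.} Here $\lambda=0$ again, and it is handled as in the first case. More directly, $\ker(L_{G+H})\subseteq\ker(L_G)$ because $L_{G+H}=L_G+X_\Omega X_\Omega^T$ is a sum of two positive semidefinite matrices. So every $0$-eigenvector of $L_{G+H}$ is inherited.

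The only point that needs care is the second case. The phrase ``counting multiplicities'' is what forces $0$ to be simple when $\lambda_2>0$, and that in turn lets us identify $E_0(L_G)$ explicitly as the span of $\mathbf 1_n$. Once this is in place, the remaining checks are immediate.
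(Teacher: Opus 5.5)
Your proof is correct and follows the paper's argument. The case $\lambda=0$ satisfies the hypothesis of Theorem~\ref{th1} vacuously, and for $\lambda>0$ the connectivity of $G$ reduces that hypothesis to $E_0(L_G)\subseteq E_0(L_{G+H})$. The only cosmetic difference is that you check this inclusion directly via $B^T\mathbf 1_n=0$, whereas the paper observes that $G+H$ remains connected.
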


\begin{proof}
If \(\lambda=0\), there are no eigenvalues of \(L_G\) smaller than
\(\lambda\). Thus, the hypothesis of Theorem~\ref{th1} is vacuously
satisfied, and the result follows.

Suppose now that \(\lambda\) is the second smallest eigenvalue of \(L_G\)
and that \(\lambda>0\). Then \(G\) is connected, so
\(
E_0(L_G)=\operatorname{span}\{\mathbf{1}\}.
\)
Since adding edges preserves connectivity, \(G+H\) is also connected, and
therefore
\(
E_0(L_{G+H})=\operatorname{span}\{\mathbf{1}\}.
\)
Thus, \(E_0(L_G)\subseteq E_0(L_{G+H})\). Since \(0\) is the only
eigenvalue of \(L_G\) smaller than \(\lambda\), the result follows from
Theorem~\ref{th1}.
\end{proof}

Having characterized the inherited eigenspace, we now study when the update produces new $\lambda$-eigendirections outside $E_\lambda(L_G)$.

\section{The orthogonal inherited--emergent decomposition} \label{sec:inherited-emergent-decomposition} 

In Theorem \ref{thm:inherited-eigenspace} we showed that the inherited subspace $\mathcal{U}_{\lambda}$ equals $\ker(T_{\lambda, \Omega})$, where $T_{\lambda, \Omega}$ is the edge-difference map    defined in \eqref{eq:inheritance-edge-difference-map}. On the other hand, by Theorem~\ref{thm:inheritance-emergence-dichotomy}, the nonzero vectors in $\operatorname{range}(T_{\lambda,\Omega})$ are precisely the edge-difference vectors of the emergent $\lambda$-eigenvectors. 
This motivates studying the range of $T_{\lambda,\Omega}$ to understand the emergent part of the updated eigenspace. 

The emergent \(\lambda\)-eigenvectors, together with the zero vector, do not in general form a subspace. To capture the directions in \(E_\lambda(L_{G+H})\) that are not accounted for by inheritance, we therefore consider the orthogonal complement of \(\mathcal U_\lambda\) within \(E_\lambda(L_{G+H})\), denoted by
\begin{equation} \label{def:ClambdaOmega} \mathcal C_{\lambda,\Omega} := E_\lambda(L_{G+H})\cap\mathcal U_\lambda^\perp.\end{equation} 
We will describe this subspace in terms of
\begin{equation} \label{def:SlambdaOmega} \mathcal S_{\lambda,\Omega} := \operatorname{range}(T_{\lambda,\Omega}) = \left\{ X_\Omega^Ty:y\in E_\lambda(L_{G+H}) \right\}. \end{equation} We call  \(\mathcal C_{\lambda,\Omega}\) the \emph{canonical emergent subspace} and \(\mathcal S_{\lambda,\Omega}\) the \emph{edge-difference space}. The edge-difference space records how updated \(\lambda\)-eigenvectors vary across the update edges. 

\medskip
A nonzero vector in $\mathcal S_{\lambda,\Omega}$ comes from an emergent $\lambda$-eigenvector, but it need not determine that eigenvector uniquely.
Indeed, $T_{\lambda,\Omega}$ cannot distinguish eigenvectors that differ by
an inherited vector. 
The next lemma makes this precise.

\begin{lemma}\label{lem:fibers-orthogonal-representatives} Let \(y_1,y_2\in E_\lambda(L_{G+H})\). Then \begin{equation} \label{eq:same-edge-difference-fiber} T_{\lambda,\Omega}(y_1)=T_{\lambda,\Omega}(y_2) \quad\text{if and only if}\quad y_1-y_2\in\mathcal U_\lambda. \end{equation} 
Consequently, if \(y\in E_\lambda(L_{G+H})\) and \(w=T_{\lambda,\Omega}(y)\), then
\begin{equation} 
\label{eq:same-edge-difference-set} 
\left\{z\in E_\lambda(L_{G+H}): T_{\lambda,\Omega}(z)=w\right\} = y+\mathcal U_\lambda. 
\end{equation}
Moreover, this set contains a unique vector in $\mathcal{C}_{\lambda, \Omega}.$

\end{lemma}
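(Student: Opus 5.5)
The plan is to use linearity of the edge-difference map together with the identification $\mathcal U_\lambda=\ker(T_{\lambda,\Omega})$ from Theorem~\ref{thm:inherited-eigenspace}. For the equivalence \eqref{eq:same-edge-difference-fiber}, note that $y_1-y_2\in E_\lambda(L_{G+H})$ because the eigenspace is a subspace. Since $T_{\lambda,\Omega}$ is linear, $T_{\lambda,\Omega}(y_1)=T_{\lambda,\Omega}(y_2)$ is equivalent to $T_{\lambda,\Omega}(y_1-y_2)=0$. That in turn is equivalent to $y_1-y_2\in\ker(T_{\lambda,\Omega})=\mathcal U_\lambda$.

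For the fiber description \eqref{eq:same-edge-difference-set}, I will apply the first part with $y_1=z$ and $y_2=y$. This shows that $z$ lies in the fiber if and only if $z-y\in\mathcal U_\lambda$, that is, $z\in y+\mathcal U_\lambda$. One small point needs checking: every element of $y+\mathcal U_\lambda$ lies in $E_\lambda(L_{G+H})$. This holds because $\mathcal U_\lambda\subseteq E_\lambda(L_{G+H})$ by the definition \eqref{def:Ulambda}.

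For the uniqueness statement, I will work inside the finite-dimensional space $E_\lambda(L_{G+H})$, which contains $\mathcal U_\lambda$ as a subspace. This gives the orthogonal decomposition $E_\lambda(L_{G+H})=\mathcal U_\lambda\oplus\mathcal C_{\lambda,\Omega}$, by the definition \eqref{def:ClambdaOmega}.

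\emph{Existence.} Write $y=u+c$ with $u\in\mathcal U_\lambda$ and $c\in\mathcal C_{\lambda,\Omega}$. Concretely, $c=y-\Pi y$, where $\Pi$ is the orthogonal projector onto $\mathcal U_\lambda$; then $c\in E_\lambda(L_{G+H})$ and $c\perp\mathcal U_\lambda$. It follows that $c=y-u\in y+\mathcal U_\lambda$.

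\emph{Uniqueness.} Suppose $c,c'\in(y+\mathcal U_\lambda)\cap\mathcal C_{\lambda,\Omega}$. Then $c-c'\in\mathcal U_\lambda\cap\mathcal U_\lambda^\perp=\{0\}$, so $c=c'$.

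No step is a real obstacle. The only care needed is to confirm that the complement is taken inside $E_\lambda(L_{G+H})$ and that $\mathcal U_\lambda$ is contained in it, so that the orthogonal decomposition is legitimate.
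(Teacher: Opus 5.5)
Your proposal is correct and follows essentially the same route as the paper: it uses $\ker(T_{\lambda,\Omega})=\mathcal U_\lambda$ for the fiber equivalence, and the orthogonal splitting $E_\lambda(L_{G+H})=\mathcal U_\lambda\mathbin{\oplus^\perp}\mathcal C_{\lambda,\Omega}$ for existence and uniqueness. The only cosmetic difference is that you get $c-c'\in\mathcal U_\lambda$ directly from coset membership, whereas the paper passes through $T_{\lambda,\Omega}(z')=T_{\lambda,\Omega}(z_2)$ and the first equivalence.
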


\begin{proof}
The equivalence in \eqref{eq:same-edge-difference-fiber} follows immediately from $\ker(T_{\lambda,\Omega})=\mathcal U_\lambda$. Then, 
\eqref{eq:same-edge-difference-set} immediately follows from \eqref{eq:same-edge-difference-fiber}.

It remains to prove the last statement.
Let $z\in(y+\mathcal{U}_{\lambda
}).$ Write $z=z_{1}+z_{2}$ with $z_{1}\in\mathcal{U}_{\lambda}$ and $z_{2}\in\mathcal{C}_{\lambda,\Omega}.$ Then, $T_{\lambda,\Omega}(z_{2})=T_{\lambda,\Omega}(z)=w.$ So, $z_{2}\in(y+\mathcal{U}_{\lambda})\cap\mathcal{C}_{\lambda,\Omega}$.

To prove uniqueness, suppose that $z^{\prime}\in(y+\mathcal{U}_{\lambda})\cap\mathcal{C}_{\lambda,\Omega}$. Then $T_{\lambda,\Omega}(z^{\prime
})=T_{\lambda,\Omega}(z_{2}),$ implying that $z^{\prime}-z_{2}\in
\mathcal{U}_{\lambda}$ by \eqref{eq:same-edge-difference-fiber}. Since $z^{\prime}-z_{2}\in\mathcal{C}_{\lambda,\Omega
}\subseteq \mathcal{U}_{\lambda}^{\perp},$ it follows that $z^{\prime}=z_{2}.$

\end{proof}

By Lemma~\ref{lem:fibers-orthogonal-representatives}, the restriction
\(
T_{\lambda,\Omega}\big|_{\mathcal C_{\lambda,\Omega}}
:
\mathcal C_{\lambda,\Omega}
\longrightarrow
\mathcal S_{\lambda,\Omega}
\)
is a linear isomorphism. We denote its inverse by
\begin{equation}
\label{def:Phi-lambda-Omega}
\Phi_{\lambda,\Omega}
:=
\left(
T_{\lambda,\Omega}\big|_{\mathcal C_{\lambda,\Omega}}
\right)^{-1}
:
\mathcal S_{\lambda,\Omega}
\longrightarrow
\mathcal C_{\lambda,\Omega}.
\end{equation}

Thus, for every $w\in\mathcal S_{\lambda,\Omega}$, $\Phi_{\lambda,\Omega}(w)$ is the unique vector  $y\in \mathcal C_{\lambda,\Omega}$ whose edge-difference vector $X_{\Omega}^Ty$ is $w$.

\medskip
Since \(\mathcal U_\lambda\) is a subspace of \(E_\lambda(L_{G+H})\), its orthogonal complement within this eigenspace is \(\mathcal C_{\lambda,\Omega}\). Together with \(\mathcal C_{\lambda,\Omega}=\Phi_{\lambda,\Omega}(\mathcal S_{\lambda,\Omega})\), this gives the following orthogonal decomposition.

\begin{theorem}[Orthogonal inherited--emergent decomposition]
\label{thm:updated-eigenspace-decomposition}
Let \(\lambda\in\sigma(L_G)\). Then
\begin{equation}
\label{eq:inherited-emergent-decomposition}
E_\lambda(L_{G+H})
=
\mathcal U_\lambda
\mathbin{\oplus^\perp}
\Phi_{\lambda,\Omega}
\bigl(\mathcal S_{\lambda,\Omega}\bigr).
\end{equation}
\end{theorem}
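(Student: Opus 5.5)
The plan is to combine two facts. First, $\mathcal U_\lambda$ is a subspace of the finite-dimensional inner product space $E_\lambda(L_{G+H})$, so it has an orthogonal complement inside it. Second, that complement coincides with the image of $\Phi_{\lambda,\Omega}$.

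For the first step, I would note that $\mathcal U_\lambda = E_\lambda(L_G)\cap E_\lambda(L_{G+H})\subseteq E_\lambda(L_{G+H})$ by \eqref{def:Ulambda}. The standard orthogonal decomposition of a finite-dimensional inner product space relative to a subspace then gives
\[
E_\lambda(L_{G+H}) = \mathcal U_\lambda \mathbin{\oplus^\perp} \bigl(E_\lambda(L_{G+H})\cap \mathcal U_\lambda^\perp\bigr).
\]
Concretely, for $y\in E_\lambda(L_{G+H})$, let $y_1$ be the orthogonal projection of $y$ onto $\mathcal U_\lambda$. Then $y-y_1$ lies in $E_\lambda(L_{G+H})$, because both $y$ and $y_1$ do, and it is orthogonal to $\mathcal U_\lambda$. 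The sum is direct because $\mathcal U_\lambda\cap\mathcal U_\lambda^\perp=\{0\}$. By \eqref{def:ClambdaOmega}, the second summand is exactly $\mathcal C_{\lambda,\Omega}$.

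For the second step, I would show $\Phi_{\lambda,\Omega}(\mathcal S_{\lambda,\Omega})=\mathcal C_{\lambda,\Omega}$. By definition \eqref{def:Phi-lambda-Omega}, $\Phi_{\lambda,\Omega}$ is the inverse of $T_{\lambda,\Omega}|_{\mathcal C_{\lambda,\Omega}}$, so its image is $\mathcal C_{\lambda,\Omega}$, provided that this restriction is a bijection onto $\mathcal S_{\lambda,\Omega}$.

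\emph{Injectivity.} Its kernel is $\mathcal C_{\lambda,\Omega}\cap\ker T_{\lambda,\Omega} = \mathcal C_{\lambda,\Omega}\cap\mathcal U_\lambda=\{0\}$, using $\ker T_{\lambda,\Omega}=\mathcal U_\lambda$ from Theorem~\ref{thm:inherited-eigenspace}.

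\emph{Surjectivity.} Take $w=T_{\lambda,\Omega}(y)$ with $y\in E_\lambda(L_{G+H})$. Lemma~\ref{lem:fibers-orthogonal-representatives} supplies a vector of $\mathcal C_{\lambda,\Omega}$ in $y+\mathcal U_\lambda$ with edge-difference vector $w$. In fact it is the component $y-y_1$ from the first step.

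Substituting $\mathcal C_{\lambda,\Omega}=\Phi_{\lambda,\Omega}(\mathcal S_{\lambda,\Omega})$ into the decomposition yields \eqref{eq:inherited-emergent-decomposition}.

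There is no real obstacle here. The only point needing care is to confirm that the orthogonal projection onto $\mathcal U_\lambda$ keeps vectors inside $E_\lambda(L_{G+H})$, so that the complement is taken within the eigenspace. This holds because $\mathcal U_\lambda\subseteq E_\lambda(L_{G+H})$.
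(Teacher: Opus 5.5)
Your proposal is correct and follows the paper's own argument. The paper likewise obtains $E_\lambda(L_{G+H})=\mathcal U_\lambda\mathbin{\oplus^\perp}\mathcal C_{\lambda,\Omega}$ as the standard orthogonal-complement fact, and gets $\mathcal C_{\lambda,\Omega}=\Phi_{\lambda,\Omega}(\mathcal S_{\lambda,\Omega})$ from $T_{\lambda,\Omega}|_{\mathcal C_{\lambda,\Omega}}$ being an isomorphism onto $\mathcal S_{\lambda,\Omega}$, via $\ker T_{\lambda,\Omega}=\mathcal U_\lambda$ and Lemma~\ref{lem:fibers-orthogonal-representatives}, just as you do.
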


The orthogonal decomposition itself is a standard fact about subspaces of a finite-dimensional inner-product space. The useful point here is that the map $\Phi_{\lambda, \Omega}$  parametrizes $\mathcal{C}_{\lambda, \Omega}$ by the edge-difference vectors in
\(\mathcal S_{\lambda,\Omega}\). 
More precisely, every
\(y\in E_\lambda(L_{G+H})\) has a unique representation
\[
y
=
u+\Phi_{\lambda,\Omega}(w),
\qquad
u\in\mathcal U_\lambda,
\qquad
w\in\mathcal S_{\lambda,\Omega}.
\]
Moreover, the parameter $w$ is simply the edge-difference vector \(
w=T_{\lambda,\Omega}(y)
\) of $y$.

\medskip
Taking dimensions in \eqref{eq:inherited-emergent-decomposition} gives a formula for the multiplicity of \(\lambda\) in the updated Laplacian in terms of the dimension of $\mathcal{S}_{\lambda, \Omega} $. At this stage, however, the formula is structural rather than directly computable from the original eigenspace and the update data, as \(\mathcal S_{\lambda,\Omega}\) is  defined in terms of the eigenspace \(E_\lambda(L_{G+H})\).

\begin{corollary}[A preliminary multiplicity identity] \label{cor:preliminary-updated-multiplicity} Let \(\lambda\in\sigma(L_G)\) and let \(Q_\lambda\) have orthonormal columns spanning \(E_\lambda(L_G)\). Then \begin{equation} \label{eq:preliminary-updated-multiplicity} m_\lambda(L_{G+H}) = \dim\mathcal U_\lambda + \dim\mathcal S_{\lambda,\Omega} = m_\lambda(L_G) - \rank(B^TQ_\lambda) + \dim\mathcal S_{\lambda,\Omega}. \end{equation} 
\end{corollary}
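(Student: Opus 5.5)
The identity follows by taking dimensions in the orthogonal decomposition of Theorem~\ref{thm:updated-eigenspace-decomposition} and then substituting the inherited dimension from Corollary~\ref{cor:inherited-dimension}.

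Since the sum in \eqref{eq:inherited-emergent-decomposition} is orthogonal, hence direct, we get
\[
m_\lambda(L_{G+H})=\dim E_\lambda(L_{G+H})=\dim\mathcal U_\lambda+\dim \Phi_{\lambda,\Omega}(\mathcal S_{\lambda,\Omega}).
\]
By Lemma~\ref{lem:fibers-orthogonal-representatives}, the restriction of $T_{\lambda,\Omega}$ to $\mathcal C_{\lambda,\Omega}$ is a linear isomorphism onto $\mathcal S_{\lambda,\Omega}$. Its inverse $\Phi_{\lambda,\Omega}$ is therefore injective, so $\dim\Phi_{\lambda,\Omega}(\mathcal S_{\lambda,\Omega})=\dim\mathcal S_{\lambda,\Omega}$. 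This gives the first equality in \eqref{eq:preliminary-updated-multiplicity}.

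The surjectivity of $T_{\lambda,\Omega}|_{\mathcal C_{\lambda,\Omega}}$ onto $\mathcal S_{\lambda,\Omega}$ should be made explicit. Every $w\in\mathcal S_{\lambda,\Omega}$ equals $T_{\lambda,\Omega}(y)$ for some $y\in E_\lambda(L_{G+H})$. The lemma places a vector of $\mathcal C_{\lambda,\Omega}$ in $y+\mathcal U_\lambda$, and that vector also maps to $w$.

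Alternatively, one can bypass $\Phi_{\lambda,\Omega}$ and apply rank--nullity to $T_{\lambda,\Omega}$ on $E_\lambda(L_{G+H})$. Its kernel is $\mathcal U_\lambda$ by Theorem~\ref{thm:inherited-eigenspace}, and its range is $\mathcal S_{\lambda,\Omega}$ by definition. This yields the first equality directly.

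For the second equality, Corollary~\ref{cor:inherited-dimension} gives $\dim\mathcal U_\lambda=m_\lambda(L_G)-\rank(B^TQ_\lambda)$. There is no real obstacle here; the only point needing care is the bijectivity of $T_{\lambda,\Omega}$ on $\mathcal C_{\lambda,\Omega}$, which the lemma already supplies.
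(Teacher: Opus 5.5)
Your proof is correct and matches the paper's argument: take dimensions in the orthogonal decomposition, use that $\Phi_{\lambda,\Omega}$ is an isomorphism to get $\dim\Phi_{\lambda,\Omega}(\mathcal S_{\lambda,\Omega})=\dim\mathcal S_{\lambda,\Omega}$, and then substitute the inherited-dimension formula. Your alternative, applying rank--nullity to $T_{\lambda,\Omega}$ on $E_\lambda(L_{G+H})$ with kernel $\mathcal U_\lambda$ and range $\mathcal S_{\lambda,\Omega}$, is also valid and slightly more direct, since it bypasses the decomposition and $\Phi_{\lambda,\Omega}$ entirely.
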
 

\begin{proof} Taking dimensions in \eqref{eq:inherited-emergent-decomposition} gives \[ m_\lambda(L_{G+H}) = \dim\mathcal U_\lambda + \dim \Phi_{\lambda,\Omega}(\mathcal S_{\lambda,\Omega}). \] The function  \(\Phi_{\lambda,\Omega}\)  is an isomorphism  with domain $\mathcal{S}_{\lambda, \Omega}$. Hence, \(\dim  \Phi_{\lambda,\Omega}(\mathcal S_{\lambda, \Omega})=\dim\mathcal S_{\lambda,\Omega}\). The final equality follows from Corollary~\ref{cor:inherited-dimension}. \end{proof} 

\section{A new characterization of $\mathcal{S}_{\lambda,\Omega}$ and reconstruction of  $\mathcal{C}_{\lambda,\Omega}$} \label{sec:reduced-characterization-reconstruction}

Section~\ref{sec:inherited-emergent-decomposition} gave the orthogonal decomposition of the updated eigenspace into its inherited and canonical emergent subspaces. 
It was also shown that
$\mathcal S_{\lambda,\Omega}$ parametrizes the canonical emergent subspace. 
However, $\mathcal S_{\lambda,\Omega}$ is defined in terms of the unknown updated eigenspace.
We now characterize this space directly from the original Laplacian and the update data. 
This makes the parameterization explicit and computable.

Rather than working directly with $\mathcal S_{\lambda,\Omega}$, we consider the rescaled edge-difference space
\begin{equation}
\label{def:ZlambdaOmega}
\mathcal Z_{\lambda,\Omega}
:=
\left\{
\Omega B^Ty:y\in E_\lambda(L_{G+H})
\right\}
=
\Omega^{1/2}\mathcal S_{\lambda,\Omega}.
\end{equation}

This rescaling will be particularly useful in the next section when we study the dependence of the structure of the updated eigenspace  on the positive update weights.

We start by characterizing $\mathcal Z_{\lambda,\Omega}$ using the nullspace of a symmetric matrix whose size is at most the number of updated edges,  which makes its calculation easier. 
This characterization provides a computable formula for the multiplicity of $\lambda$. 
It also gives a way to reconstruct the canonical emergent subspace and to obtain a basis of the updated eigenspace. 

\medskip
  We first introduce the space
\begin{equation}
\label{def:Klambda}
\mathcal K_\lambda
:=
\ker(Q_\lambda^TB).
\end{equation}
As we will show, the rescaled edge-difference space
$\mathcal Z_{\lambda,\Omega}$ is contained in $\mathcal K_\lambda$.
Thus, $\mathcal K_\lambda$ contains all rescaled edge-difference vectors that can produce emergent $\lambda$-eigenvectors.

Let
\[
d
:=
\dim\mathcal K_\lambda
=
p-\rank(B^TQ_\lambda),
\]
and fix a matrix $Z\in\mathbb R^{p\times d}$ whose columns form an orthonormal basis of $\mathcal K_\lambda$. 

We next introduce three matrices that will help us determine which vectors in $\mathcal K_\lambda$ belong to $\mathcal Z_{\lambda,\Omega}$. 
Define
\begin{align}
G_\lambda
&:=
(L_G-\lambda I_n)^\dagger,
\label{def:Glambda}
\\
\Gamma_\lambda
&:=
B^TG_\lambda B,
\label{def:gamma}
\\
F_\lambda(\Omega)
&:=
Z^T\bigl(\Omega^{-1}+\Gamma_\lambda\bigr)Z
\in\mathbb R^{d\times d}.
\label{eq:weight-matrix-pencil}
\end{align}
We will show that
\[
\mathcal Z_{\lambda,\Omega}
=
\{Zc:c\in\ker( F_\lambda(\Omega))\}.
\]
Thus, the nullspace of $F_\lambda(\Omega)$ parametrizes the rescaled edge-difference space $\mathcal{Z}_{\lambda,\Omega}$ and hence the canonical emergent subspace $\mathcal{C}_{\lambda, \Omega}$.

When $d=0$, we use the usual empty-matrix convention.
In particular, $\dim\ker F_\lambda(\Omega)=0$.

Note that $L_G-\lambda I_n$ is symmetric and has kernel $E_\lambda(L_G)$. 
Thus, its pseudoinverse $G_\lambda$ acts as its inverse on $E_\lambda(L_G)^\perp$ and is zero on $E_\lambda(L_G)$. 
It follows that
\begin{equation}
\label{eq:GlambdaxL-lambda}
(L_G-\lambda I_n)G_\lambda
=
G_\lambda(L_G-\lambda I_n)
=
P_{\lambda,\perp}.
\end{equation}

\begin{remark}
The matrix $F_\lambda(\Omega)$ is closely related to the classical
secular matrix for low-rank eigenvalue updates. If
$t\notin\sigma(L_G)$, the matrix determinant lemma gives
\begin{equation}
\label{eq:secular-determinant-identity}
\det(L_{G+H}-tI_n)
=
\det(L_G-tI_n)\det(\Omega)
\det\left(
\Omega^{-1}+B^T(L_G-tI_n)^{-1}B
\right).
\end{equation}
The $p\times p$ matrix
\[
\Omega^{-1}+B^T(L_G-tI_n)^{-1}B
\]
is the secular matrix. Since $Q_\lambda^TBZ=0$, its singular part at
$t=\lambda$ vanishes after multiplication by $Z^T$ and $Z$. Therefore,
\begin{equation}
\label{eq:F-as-compressed-secular-limit}
F_\lambda(\Omega)
=
\lim_{t\to\lambda}
Z^T\left(
\Omega^{-1}+B^T(L_G-tI_n)^{-1}B
\right)Z.
\end{equation}
Thus, $F_\lambda(\Omega)$ is the limit of the secular matrix after
restricting it to $\mathcal K_\lambda$.
\end{remark}

\medskip
 The next theorem gives two different   characterizations of $\mathcal{Z}_{\lambda, \Omega}$. One of them is written in terms of 
the nullspace of $F_\lambda(\Omega)$. 
Since $F_\lambda(\Omega)$ has size $d\times d$, where $d\leq p$, it reduces the problem of computing $\mathcal{C}_{\lambda, \Omega}$ to computing the kernel of a matrix of size  at most the number $p$ of updated edges.

\begin{theorem}[Characterization of the rescaled edge-difference space] \label{prop:edge-difference-space} With the notation above, \begin{equation} \label{eq:rescaled-edge-space-characterization} \mathcal Z_{\lambda,\Omega} = \left\{ z\in\mathcal K_\lambda: \bigl(\Omega^{-1}+\Gamma_\lambda\bigr)z \in\operatorname{range}(B^TQ_\lambda) \right\} = \left\{ Zc:c\in\ker F_\lambda(\Omega) \right\}. \end{equation} \end{theorem}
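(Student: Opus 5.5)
The plan is to prove the two equalities in \eqref{eq:rescaled-edge-space-characterization} in turn. Write \(\mathcal M\) for the middle set. First we show \(\mathcal Z_{\lambda,\Omega}=\mathcal M\) by translating the eigen-equation for \(L_{G+H}\) into a condition on \(z=\Omega B^Ty\). Then we identify \(\mathcal M\) with \(Z\ker F_\lambda(\Omega)\) using the orthonormal basis \(Z\) of \(\mathcal K_\lambda\). The tools are the spectral projector identity \eqref{eq:GlambdaxL-lambda} and the facts \(\range(Q_\lambda)=E_\lambda(L_G)=\ker(L_G-\lambda I_n)\) and \(G_\lambda Q_\lambda=0\).

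\emph{Inclusion \(\mathcal Z_{\lambda,\Omega}\subseteq\mathcal M\).} Let \(y\in E_\lambda(L_{G+H})\) and set \(z=\Omega B^Ty\). The eigen-equation reads
\[
(L_G-\lambda I_n)y=-Bz.
\]
Multiplying on the left by \(Q_\lambda^T\) kills the left side, so \(Q_\lambda^TBz=0\), that is, \(z\in\mathcal K_\lambda\). Applying \(G_\lambda\) and using \eqref{eq:GlambdaxL-lambda} gives \(P_{\lambda,\perp}y=-G_\lambda Bz\). Hence
\[
y=Q_\lambda a-G_\lambda Bz,\qquad a:=Q_\lambda^Ty.
\]
Now apply \(B^T\) and use \(B^Ty=\Omega^{-1}z\):
\[
\Omega^{-1}z=B^TQ_\lambda a-\Gamma_\lambda z,
\]
so \((\Omega^{-1}+\Gamma_\lambda)z=B^TQ_\lambda a\in\range(B^TQ_\lambda)\).

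\emph{Inclusion \(\mathcal M\subseteq\mathcal Z_{\lambda,\Omega}\).} Given \(z\in\mathcal K_\lambda\) with \((\Omega^{-1}+\Gamma_\lambda)z=B^TQ_\lambda a\), define \(y:=Q_\lambda a-G_\lambda Bz\). Then:
\begin{itemize}
\item \(B^Ty=B^TQ_\lambda a-\Gamma_\lambda z=\Omega^{-1}z\), so \(\Omega B^Ty=z\);
\item \((L_G-\lambda I_n)y=-P_{\lambda,\perp}Bz=-Bz+Q_\lambda Q_\lambda^TBz=-Bz\), because \(z\in\mathcal K_\lambda\);
\item therefore \((L_{G+H}-\lambda I_n)y=-Bz+B\Omega B^Ty=-Bz+Bz=0\).
\end{itemize}
So \(y\in E_\lambda(L_{G+H})\) and \(z=\Omega B^Ty\in\mathcal Z_{\lambda,\Omega}\).

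\emph{Second equality.} Every \(z\in\mathcal K_\lambda\) is \(Zc\) for a unique \(c\). The key observation is that \(\range(B^TQ_\lambda)=\ker(Q_\lambda^TB)^\perp=\mathcal K_\lambda^\perp\). Hence \((\Omega^{-1}+\Gamma_\lambda)Zc\in\range(B^TQ_\lambda)\) if and only if \(Z^T(\Omega^{-1}+\Gamma_\lambda)Zc=0\), i.e.\ \(c\in\ker F_\lambda(\Omega)\). The case \(d=0\) is trivial under the empty-matrix convention.

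The main obstacle is the reverse inclusion in the first equality: we must check that the constructed \(y\) really satisfies the updated eigen-equation. That check hinges on \(z\in\mathcal K_\lambda\), which lets \(P_{\lambda,\perp}Bz\) be replaced by \(Bz\). Once that is in place, the range/orthogonal-complement identity makes the reduction to \(F_\lambda(\Omega)\) immediate.
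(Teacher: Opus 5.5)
Your proposal is correct and follows essentially the same route as the paper's proof. It uses the same derivation of $Q_\lambda^TBz=0$, the same representation $y=Q_\lambda a-G_\lambda Bz$ leading to $B^TQ_\lambda a=(\Omega^{-1}+\Gamma_\lambda)z$, the same converse construction and verification, and the same identity $\range(B^TQ_\lambda)=\mathcal K_\lambda^\perp$ for the second equality. The only cosmetic difference is that you get $P_{\lambda,\perp}y=-G_\lambda Bz$ by applying $G_\lambda$ directly to the eigen-equation, whereas the paper writes $y=Q_\lambda a+r$ and inverts $L_G-\lambda I_n$ on $E_\lambda(L_G)^\perp$.
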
 

\begin{proof}
Let \(z\in\mathcal Z_{\lambda,\Omega}\). By definition \eqref{def:ZlambdaOmega}, there exists
\(y\in E_\lambda(L_{G+H})\) such that \(z=\Omega B^Ty\). The eigenvalue
equation $L_{G+H}y= \lambda y$ can therefore be written as
\(
(L_G-\lambda I_n)y+Bz=0.
\)
Multiplying by \(Q_\lambda^T\) on the left gives \(Q_\lambda^TBz=0\), so
\(z\in\mathcal K_\lambda\).

Write \(y=Q_\lambda a+r\), where $a=Q_{\lambda}^Ty$ and
\(r\in E_\lambda(L_G)^\perp\). Then
\((L_G-\lambda I_n)r=-Bz\). Since \(z\in\mathcal K_\lambda=\ker(Q_{\lambda}^TB)\), the vector
\(Bz\) belongs to \(E_\lambda(L_G)^\perp\), and hence
\(
r=-G_\lambda Bz.
\)
Thus,
\begin{equation}
\label{eq:reduced-reconstruction-relations}
y=Q_\lambda a-G_\lambda Bz.
\end{equation}
Multiplying \eqref{eq:reduced-reconstruction-relations} by \(B^T\) and
using \(B^Ty=\Omega^{-1}z\) and
\(\Gamma_\lambda=B^TG_\lambda B\), we obtain
\begin{equation}
\label{eq:compatibility-condition}
B^TQ_\lambda a
=
(\Omega^{-1}+\Gamma_\lambda)z.
\end{equation}
Therefore,
\((\Omega^{-1}+\Gamma_\lambda)z\in
\operatorname{range}(B^TQ_\lambda)\).

Conversely, suppose that \(z\in\mathcal K_\lambda\) and
\(
(\Omega^{-1}+\Gamma_\lambda)z
\in\operatorname{range}(B^TQ_\lambda).
\)
Choose \(a\in\mathbb R^m\) satisfying
\eqref{eq:compatibility-condition}, and define \(y\) by
\eqref{eq:reduced-reconstruction-relations}. Then
\(B^Ty=\Omega^{-1}z\), so \(\Omega B^Ty=z\). Moreover, since $Q_{\lambda}^TBz=0$,
\[
(L_{G+H}-\lambda I_n)y
=
-P_{\lambda,\perp}Bz+Bz
=
P_\lambda Bz
=
0.
\]
Hence,
\(y\in E_\lambda(L_{G+H})\), and therefore
\(z\in\mathcal Z_{\lambda,\Omega}\). This completes the proof of the first equality in
\eqref{eq:rescaled-edge-space-characterization}.

Finally,
\(\operatorname{range}(B^TQ_\lambda)=\mathcal K_\lambda^\perp\).
Also, every \(z\in\mathcal K_\lambda\) can be written uniquely as
\(z=Zc\) for some $c\in \mathbb{R}^d$. Moreover,
\[
(\Omega^{-1}+\Gamma_\lambda)Zc\in\mathcal K_\lambda^\perp
\quad\Longleftrightarrow\quad
Z^T(\Omega^{-1}+\Gamma_\lambda)Zc=0.
\]
By the definition of \(F_\lambda(\Omega)\), the last condition is
\(F_\lambda(\Omega)c=0\), which proves the second equality.
\end{proof}

Theorem~\ref{prop:edge-difference-space} implies that $\dim\mathcal Z_{\lambda,\Omega}=\dim\ker F_\lambda(\Omega)$, since $Z$ has orthonormal columns.
Taking into account that  $\mathcal Z_{\lambda,\Omega}=\Omega^{1/2}\mathcal S_{\lambda,\Omega}$, and $\Omega^{1/2}$ is invertible, we have
 $\dim\mathcal S_{\lambda,\Omega}=\dim\ker F_\lambda(\Omega)$.
Combining this identity with the preliminary multiplicity formula in Corollary~\ref{cor:preliminary-updated-multiplicity} gives the following computable formula.

\begin{corollary}[Multiplicity after a positive edge update]
\label{cor:weighted-algebraic-multiplicity}
With the notation above,
\begin{equation}
\label{eq:weighted-algebraic-multiplicity}
m_\lambda(L_{G+H})
=
m_\lambda(L_G)
-\rank(B^TQ_\lambda)
+\dim \ker F_\lambda(\Omega).
\end{equation}
In particular, emergence occurs if and only if
\(\ker F_\lambda(\Omega)\neq\{0\}\).
\end{corollary}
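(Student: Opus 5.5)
The multiplicity formula will come directly from combining Corollary~\ref{cor:preliminary-updated-multiplicity} with Theorem~\ref{prop:edge-difference-space}. Corollary~\ref{cor:preliminary-updated-multiplicity} already gives
\[
m_\lambda(L_{G+H})=m_\lambda(L_G)-\rank(B^TQ_\lambda)+\dim\mathcal S_{\lambda,\Omega},
\]
so all that remains is to show $\dim\mathcal S_{\lambda,\Omega}=\dim\ker F_\lambda(\Omega)$. This takes two observations.

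\emph{First step.} By Theorem~\ref{prop:edge-difference-space},
\[
\mathcal Z_{\lambda,\Omega}=\{Zc:c\in\ker F_\lambda(\Omega)\},
\]
and $c\mapsto Zc$ is injective because $Z$ has orthonormal columns. Hence $\dim\mathcal Z_{\lambda,\Omega}=\dim\ker F_\lambda(\Omega)$. When $d=0$ both sides are zero by the empty-matrix convention, since then $\mathcal K_\lambda=\{0\}\supseteq\mathcal Z_{\lambda,\Omega}$.

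\emph{Second step.} By definition \eqref{def:ZlambdaOmega}, $\mathcal Z_{\lambda,\Omega}=\Omega^{1/2}\mathcal S_{\lambda,\Omega}$. Indeed,
\[
\Omega B^Ty=\Omega^{1/2}\bigl(\Omega^{1/2}B^Ty\bigr)=\Omega^{1/2}X_\Omega^Ty,
\]
and $\Omega^{1/2}$ is invertible because all weights are positive, so $\dim\mathcal Z_{\lambda,\Omega}=\dim\mathcal S_{\lambda,\Omega}$. Substituting into the preliminary identity gives \eqref{eq:weighted-algebraic-multiplicity}.

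\emph{The ``in particular''.} Emergence means there is a nonzero $y\in E_\lambda(L_{G+H})$ with $y\notin E_\lambda(L_G)$. By Theorem~\ref{thm:inheritance-emergence-dichotomy}, this is equivalent to the existence of $y\in E_\lambda(L_{G+H})$ with $X_\Omega^Ty\neq0$, that is, to $\mathcal S_{\lambda,\Omega}\neq\{0\}$. Conversely, a nonzero $w\in\mathcal S_{\lambda,\Omega}$ arises from some $y$ with $X_\Omega^Ty=w\neq 0$. Such a $y$ is nonzero and, by the dichotomy, emergent. By the two dimension equalities above, $\mathcal S_{\lambda,\Omega}\neq\{0\}$ holds exactly when $\ker F_\lambda(\Omega)\neq\{0\}$.

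No step is a real obstacle. The only points needing care are the degenerate case $d=0$ and keeping the rescaling between $\mathcal S_{\lambda,\Omega}$ and $\mathcal Z_{\lambda,\Omega}$ straight.
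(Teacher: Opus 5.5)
Your proposal is correct and matches the paper's argument: combine the preliminary multiplicity identity (Corollary~\ref{cor:preliminary-updated-multiplicity}) with $\dim\mathcal S_{\lambda,\Omega}=\dim\mathcal Z_{\lambda,\Omega}=\dim\ker F_\lambda(\Omega)$, which uses the invertibility of $\Omega^{1/2}$ and the injectivity of $c\mapsto Zc$. Your use of the inheritance--emergence dichotomy to show that emergence occurs iff $\mathcal S_{\lambda,\Omega}\neq\{0\}$ spells out the ``in particular'' clause, which the paper leaves implicit.
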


Every term on the right-hand side of \eqref{eq:weighted-algebraic-multiplicity}
can be computed from \(L_G\), \(Q_\lambda\), and the update data \(B\) and
\(\Omega\), without prior knowledge of \(E_\lambda(L_{G+H})\).

\medskip

We now show how to reconstruct the vectors in
\(\mathcal{C}_{\lambda,\Omega}\) from vectors in
\(\ker(F_\lambda(\Omega))\). 

Let $c\in\ker F_\lambda(\Omega)$ and set $z_c:=Zc$.
By Theorem~\ref{prop:edge-difference-space},
$z_c\in\mathcal Z_{\lambda,\Omega}$. 
In particular,
\[
(\Omega^{-1}+\Gamma_\lambda)z_c
\in\operatorname{range}(B^TQ_\lambda).
\]
Therefore, the equation
\begin{equation}
\label{eq:compatibility-eq}
B^TQ_\lambda a
=
(\Omega^{-1}+\Gamma_\lambda)z_c
\end{equation}
has at least one solution. 
Let $a_c$ be its unique minimum-norm solution:
\begin{equation}\label{eq:ac}
a_c
:=
(B^TQ_\lambda)^\dagger
(\Omega^{-1}+\Gamma_\lambda)z_c.
\end{equation}
We now define
\begin{equation}
\label{eq:explicit-reconstruction-vectors}
y_c
:=
Q_\lambda a_c-G_\lambda Bz_c.
\end{equation}
Using \eqref{eq:compatibility-eq}, we obtain
\begin{align}
\Omega B^Ty_c
&=
\Omega\left(B^TQ_\lambda a_c-\Gamma_\lambda z_c\right)
=
\Omega\left(
(\Omega^{-1}+\Gamma_\lambda)z_c-\Gamma_\lambda z_c
\right)
=
z_c.
\label{eq:yc-zc}
\end{align}
Thus, $y_c$ has the rescaled edge-difference vector $z_c$. 
The next theorem shows that these vectors $y_c$ are precisely the vectors in the canonical emergent subspace.

\begin{theorem}[Reconstruction of the canonical emergent subspace]
\label{thm:explicit-reconstruction}
With the notation above,
\begin{equation}
\label{eq:canonical-emergent-reconstruction}
\mathcal C_{\lambda,\Omega}
=
\left\{
y_c:c\in\ker F_\lambda(\Omega)
\right\}.
\end{equation}
\end{theorem}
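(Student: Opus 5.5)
We prove the two inclusions. The idea is that each $y_c$ lies in $E_\lambda(L_{G+H})$, lies in $\mathcal U_\lambda^\perp$, and has the prescribed edge-difference vector. The uniqueness in Lemma~\ref{lem:fibers-orthogonal-representatives} then identifies $y_c$ with $\Phi_{\lambda,\Omega}$ of that vector, and the theorem follows because $\Phi_{\lambda,\Omega}$ is onto $\mathcal C_{\lambda,\Omega}$.

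\emph{Step 1: $y_c\in E_\lambda(L_{G+H})$.} Fix $c\in\ker F_\lambda(\Omega)$. By construction $z_c=Zc\in\mathcal K_\lambda$, so $Q_\lambda^TBz_c=0$ and hence $P_\lambda Bz_c=0$. By \eqref{eq:yc-zc}, $\Omega B^Ty_c=z_c$, so $X_\Omega X_\Omega^Ty_c=B\Omega B^Ty_c=Bz_c$. Using \eqref{eq:GlambdaxL-lambda} we get
\[
(L_{G+H}-\lambda I_n)y_c=-(L_G-\lambda I_n)G_\lambda Bz_c+Bz_c=-P_{\lambda,\perp}Bz_c+Bz_c=P_\lambda Bz_c=0.
\]
This is the same computation as in the converse part of the proof of Theorem~\ref{prop:edge-difference-space}.

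\emph{Step 2: $y_c\perp\mathcal U_\lambda$.} This is the step needing care. I expect it to be the main obstacle, and the minimum-norm choice of $a_c$ in \eqref{eq:ac} is exactly what makes it work. Let $u\in\mathcal U_\lambda$. By Theorem~\ref{thm:inherited-eigenspace}, $u=Q_\lambda a$ for some $a\in\ker(B^TQ_\lambda)$.

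First, $G_\lambda Bz_c$ is orthogonal to $u$. Indeed, $\range(G_\lambda)\subseteq E_\lambda(L_G)^\perp$, since $G_\lambda$ vanishes on $E_\lambda(L_G)$ and is symmetric, while $u\in E_\lambda(L_G)$.

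Second, $u^TQ_\lambda a_c=a^Ta_c$, because $Q_\lambda$ has orthonormal columns. Since $a_c$ is the minimum-norm solution, $a_c\in\range\bigl((B^TQ_\lambda)^T\bigr)=\ker(B^TQ_\lambda)^\perp$. Hence $a^Ta_c=0$, and therefore $y_c\perp u$. Combined with Step 1, this gives $y_c\in\mathcal C_{\lambda,\Omega}$, proving $\supseteq$.

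\emph{Step 3: $\subseteq$.} Take $y\in\mathcal C_{\lambda,\Omega}$ and set $z=\Omega B^Ty\in\mathcal Z_{\lambda,\Omega}$. By Theorem~\ref{prop:edge-difference-space}, $z=Zc$ for some $c\in\ker F_\lambda(\Omega)$, so $z=z_c$. Then $X_\Omega^Ty=\Omega^{-1/2}z_c=X_\Omega^Ty_c$, so $T_{\lambda,\Omega}(y)=T_{\lambda,\Omega}(y_c)$. Both $y$ and $y_c$ lie in $\mathcal C_{\lambda,\Omega}$ by Steps 1 and 2. By Lemma~\ref{lem:fibers-orthogonal-representatives}, $y-y_c\in\mathcal U_\lambda\cap\mathcal U_\lambda^\perp=\{0\}$, so $y=y_c$.

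As a byproduct, $c\mapsto y_c$ is linear and injective. Linearity holds because $a_c$ depends linearly on $c$. Injectivity holds because $y_c$ determines $z_c$ via \eqref{eq:yc-zc}, and $z_c$ determines $c$ since $Z$ has orthonormal columns. Consequently $\dim\mathcal C_{\lambda,\Omega}=\dim\ker F_\lambda(\Omega)$, consistent with Corollary~\ref{cor:weighted-algebraic-multiplicity}.
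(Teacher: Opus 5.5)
Your proof is correct and follows essentially the same route as the paper. You verify $y_c\in E_\lambda(L_{G+H})$ via $P_\lambda Bz_c=0$, and get $y_c\perp\mathcal U_\lambda$ from the minimum-norm property of $a_c$ together with $\range(G_\lambda)\subseteq E_\lambda(L_G)^\perp$. For $\subseteq$, the only cosmetic difference is that you invoke Lemma~\ref{lem:fibers-orthogonal-representatives} to get $y-y_c\in\mathcal U_\lambda$, whereas the paper uses $B^T(y-y_c)=0$ and $E_\lambda(L_{G+H})\cap\ker(B^T)=\mathcal U_\lambda$ directly; these amount to the same fact.
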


\begin{proof}
($\supseteq$) Let \(c\in\ker F_\lambda(\Omega)\). We  show that
\(y_c\in\mathcal C_{\lambda,\Omega}\).
By \eqref{eq:yc-zc}, $\Omega B^Ty_c=z_c$.  By definition of $Z$, $z_c:=Zc\in \mathcal{K}_{\lambda}$, and hence \(Q_\lambda^TBz_c=0\).
Therefore,
\[
\begin{aligned}
(L_{G+H}-\lambda I_n)y_c
&=
(L_G-\lambda I_n)
\bigl(Q_\lambda a_c-G_\lambda Bz_c\bigr)
+Bz_c\\
&=
-P_{\lambda,\perp}Bz_c+Bz_c
=
P_\lambda Bz_c
=
0.
\end{aligned}
\]
Thus, \(y_c\in E_\lambda(L_{G+H})\).

It remains to show that $y_c\perp\mathcal U_\lambda$. Since $a_c$ is
the minimum-norm solution of \eqref{eq:compatibility-eq}, $a_c\in\ker(B^TQ_\lambda)^\perp$.
Since $\mathcal U_\lambda
=
\left\{
Q_\lambda h:h\in\ker(B^TQ_\lambda)
\right\}$ by Theorem \ref{thm:inherited-eigenspace}, it follows that $Q_\lambda a_c\perp\mathcal U_\lambda$. Moreover,
$G_\lambda Bz_c\in \operatorname{range}({G_{\lambda}})=E_\lambda(L_G)^\perp$, while
$\mathcal U_\lambda\subseteq E_\lambda(L_G)$. Hence,
$G_\lambda Bz_c\perp\mathcal U_\lambda$. Therefore,
\[
y_c
=
Q_\lambda a_c-G_\lambda Bz_c
\perp\mathcal U_\lambda.
\]
Thus, $y_c\in\mathcal C_{\lambda,\Omega}$.

($\subseteq$) Let \(y\in\mathcal C_{\lambda,\Omega}\), and set
\(z:=\Omega B^Ty\). Then, by \eqref{def:ZlambdaOmega},
\(z\in\mathcal Z_{\lambda,\Omega}\), so
Theorem~\ref{prop:edge-difference-space} gives
\(z=Zc=z_c\) for a unique \(c\in\ker F_\lambda(\Omega)\). By \eqref{eq:yc-zc}, we have
\(\Omega B^Ty=\Omega B^Ty_c\). Because \(\Omega\) is invertible,
\(B^T(y-y_c)=0\). Moreover,
\(y-y_c\in E_\lambda(L_{G+H})\) because, by the first part of the proof, $y_c\in \mathcal{C}_{\lambda, \Omega}$. Hence,
\[
y-y_c\in
E_\lambda(L_{G+H})\cap\ker(B^T)
=
\mathcal U_\lambda.
\]
On the other hand,
\(y-y_c\in\mathcal U_\lambda^\perp\), because both \(y\) and \(y_c\)
belong to \(\mathcal C_{\lambda,\Omega}\). Therefore \(y-y_c=0\), and
hence \(y=y_c\).
This proves \eqref{eq:canonical-emergent-reconstruction}.
\end{proof}

The reconstruction theorem now gives the following procedure for computing a
basis of $E_{\lambda}(L_{G+H})$.

\begin{corollary}[A basis of the updated eigenspace]
\label{cor:basis-updated-eigenspace}
Let \(\{h_1,\ldots,h_s\}\) be a basis of
\(\ker(B^TQ_\lambda)\), and let \(\{c_1,\ldots,c_r\}\) be a basis of
\(\ker F_\lambda(\Omega)\). Then
\begin{equation}\label{eq:basis}
\{Q_\lambda h_1,\ldots,Q_\lambda h_s\}\cup\{
y_{c_1},\ldots,y_{c_r}\}
\end{equation}
is a basis of \(E_\lambda(L_{G+H})\), in which the first subset is a
basis of $\mathcal{U}_{\lambda}$ and the second is  a
basis of $\mathcal{C}_{\lambda, \Omega}$.
\end{corollary}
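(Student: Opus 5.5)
The argument combines the orthogonal decomposition of Theorem~\ref{thm:updated-eigenspace-decomposition}, the description of $\mathcal U_\lambda$ from Theorem~\ref{thm:inherited-eigenspace}, and the reconstruction of Theorem~\ref{thm:explicit-reconstruction}. I will show that each of the two subsets in \eqref{eq:basis} is a basis of the corresponding summand. The union is then a basis of $E_\lambda(L_{G+H})=\mathcal U_\lambda\mathbin{\oplus^\perp}\mathcal C_{\lambda,\Omega}$, because a union of bases of the summands of a direct sum is a basis of the whole space.

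\textbf{The inherited part.} By \eqref{eq:inherited-space-characterizations}, $\mathcal U_\lambda=\{Q_\lambda a: a\in\ker(B^TQ_\lambda)\}$, so the map $a\mapsto Q_\lambda a$ sends $\ker(B^TQ_\lambda)$ onto $\mathcal U_\lambda$. This map is injective because $Q_\lambda$ has orthonormal columns. Hence it is a linear isomorphism $\ker(B^TQ_\lambda)\to\mathcal U_\lambda$, and it carries the basis $\{h_1,\ldots,h_s\}$ to the basis $\{Q_\lambda h_1,\ldots,Q_\lambda h_s\}$ of $\mathcal U_\lambda$.

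\textbf{The emergent part.} Consider the map $c\mapsto y_c$ on $\ker F_\lambda(\Omega)$, defined through \eqref{eq:ac} and \eqref{eq:explicit-reconstruction-vectors}.

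First, it is linear. Indeed, $z_c=Zc$ is linear in $c$. Then $a_c=(B^TQ_\lambda)^\dagger(\Omega^{-1}+\Gamma_\lambda)Zc$ is linear in $c$, since it is a fixed matrix applied to $c$. Finally, $y_c=Q_\lambda a_c-G_\lambda BZc$ is linear in $c$.

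Second, by Theorem~\ref{thm:explicit-reconstruction}, its image is exactly $\mathcal C_{\lambda,\Omega}$.

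Third, it is injective. If $y_c=0$, then \eqref{eq:yc-zc} gives $Zc=z_c=\Omega B^Ty_c=0$. Since $Z$ has orthonormal columns, this forces $c=0$.

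So $c\mapsto y_c$ is a linear isomorphism $\ker F_\lambda(\Omega)\to\mathcal C_{\lambda,\Omega}$, and it carries $\{c_1,\ldots,c_r\}$ to a basis $\{y_{c_1},\ldots,y_{c_r}\}$ of $\mathcal C_{\lambda,\Omega}$.

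Combining the two parts with Theorem~\ref{thm:updated-eigenspace-decomposition} completes the argument. The only step needing any care is the linearity and injectivity of $c\mapsto y_c$. Linearity must not be taken for granted: $a_c$ is defined as a minimum-norm solution rather than an arbitrary one, and it is this choice that makes it the linear expression above. Injectivity comes from \eqref{eq:yc-zc}. As a consistency check, the count $s+r$ agrees with Corollary~\ref{cor:weighted-algebraic-multiplicity}.
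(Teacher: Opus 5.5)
Your proof is correct and follows the paper's argument essentially step for step. You use the isomorphism $a\mapsto Q_\lambda a$ from $\ker(B^TQ_\lambda)$ onto $\mathcal U_\lambda$, the linearity and injectivity of $c\mapsto y_c$ (via $\Omega B^Ty_c=Zc$) with image $\mathcal C_{\lambda,\Omega}$ from Theorem~\ref{thm:explicit-reconstruction}, and finally the orthogonal decomposition $E_\lambda(L_{G+H})=\mathcal U_\lambda\mathbin{\oplus^\perp}\mathcal C_{\lambda,\Omega}$; your remark that linearity rests on the pseudoinverse formula for $a_c$ makes explicit a detail the paper leaves implicit.
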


\begin{proof}
By Theorem~\ref{thm:inherited-eigenspace},
\(
\mathcal U_\lambda
=
\{Q_\lambda h:h\in\ker(B^TQ_\lambda)\}.
\)
Since $Q_{\lambda}$ has full column rank, $\{Q_\lambda h_1,\ldots,Q_\lambda h_s\}$ is a basis of $\mathcal{U}_{\lambda}.$

To prove the second part, we first note that the map \(c\mapsto y_c\) is linear. It is also injective. Indeed,  if $y_c=0$, then \eqref{eq:yc-zc} gives $Zc =0$. Since $Z$ has full column rank, $c=0.$
Moreover,
Theorem~\ref{thm:explicit-reconstruction} shows that the range of this
map is $\mathcal C_{\lambda,\Omega}$. Therefore, $\{y_{c_1},\ldots,y_{c_r}\}$ is a basis of $\mathcal C_{\lambda,\Omega}$.

Now the result follows since
\(
E_\lambda(L_{G+H})
=
\mathcal U_\lambda
\mathbin{\oplus^\perp}
\mathcal C_{\lambda,\Omega}.
\)
\end{proof}

The following example illustrates the usefulness of the reduced matrix $F_{\lambda}(\Omega)$, the multiplicity formula, and the reconstruction of the canonical emergent subspace.

\begin{example}[The reduced matrix for a one-edge update] \label{ex:weighted-emergent-dimension} Let \(G\) be the path on three vertices, with Laplacian \[ L_G= \begin{pmatrix} 1&-1&0\\ -1&2&-1\\ 0&-1&1 \end{pmatrix}. \] The eigenvalues of \(L_G\) are \(0,1,3\), and the unit vector \(q=\frac1{\sqrt6}(1,-2,1)^T\) spans \(E_3(L_G)\). Add the missing edge \(\{1,3\}\) with weight \(\omega>0\). Thus, \(B=b=e_1-e_3\), \(\Omega=[\omega]\), and \(L_{G+H}=L_G+\omega bb^T\). Since \(q^Tb=0\), \(\mathcal K_3=\mathbb R\), and we may take \(Z=[1]\). The vector \(b\) is a \(1\)-eigenvector of \(L_G\), so
\((L_G-3I_3)b=-2b\). Since \(L_G\) is symmetric,
\(b\perp E_3(L_G)\). On \(E_3(L_G)^\perp\), the pseudoinverse
\(G_3=(L_G-3I_3)^\dagger\) agrees with the inverse of the restriction of
\(L_G-3I_3\). Hence, \(G_3b=-\frac12b\).

Since \(b^Tb=2\), it follows that
\(\Gamma_3=b^TG_3b=-1\). Therefore, the reduced matrix is given by \[ F_3(\Omega)=\left[\frac1{\omega}-1\right]\in \mathbb{R}^{1\times 1}. \] 
By Corollary~\ref{cor:weighted-algebraic-multiplicity}, an emergent
\(3\)-eigendirection occurs precisely when
\(\ker F_3(\Omega)\neq\{0\}\), or equivalently, when
\(F_3(\Omega)\) is singular. Since \(F_3(\Omega)\) is a \(1\times1\)
matrix, this happens precisely when
\(\omega=1\).
Since \(b^Tq=0\), Theorem~\ref{thm:inherited-eigenspace} shows that the original 3-eigenvector
\(q\) is inherited  for every \(\omega>0\).
Corollary~\ref{cor:weighted-algebraic-multiplicity} therefore gives \[ m_3(L_{G+H}) = \begin{cases} 1,&\omega\neq1,\\ 2,&\omega=1. \end{cases} \] 

We now use the reconstruction formula to obtain $\mathcal{C}_{3, \Omega}$ when $\omega=1$. In this case, we have \(\ker F_3(\Omega)=\mathbb R\). For 
\(c\in\mathbb R\), we have \(z_c=c\). Moreover, since
\(B^TQ_3=b^Tq=0\), the minimum-norm solution in
\eqref{eq:ac} is \(a_c=0\). Hence,
\[
y_c=-G_3bc=\frac{c}{2}b.
\]
Thus, every nonzero \(c\) produces the same canonical emergent direction,
and Theorem~\ref{thm:explicit-reconstruction} gives
\[
\mathcal C_{3,\Omega}
=
\operatorname{span}\{b\}.
\]

 Consequently, \begin{equation} \label{eq:P3-reduced-eigenspace} E_3(L_{G+H}) = \begin{cases} \operatorname{span}\{q\}, &\omega\neq1,\\[1mm] \operatorname{span}\{q\} \mathbin{\oplus^\perp} \operatorname{span}\{b\}, &\omega=1. \end{cases} \end{equation} 

This conclusion can also be checked directly: \(L_{G+H} b=(1+2\omega)b\). Thus, \(b\) becomes a \(3\)-eigenvector precisely when \(\omega=1\). At this exceptional weight, the added edge turns the path into a cycle, whose nonzero Laplacian eigenvalue \(3\) has multiplicity two. 
\end{example}

\section{Weights that produce emergence} \label{sec:exceptional-weights}

Throughout this section, we fix the original Laplacian \(L_G\), an eigenvalue
\(\lambda\in\sigma(L_G)\), and the set of update edges, represented by an
oriented incidence matrix \(B\).
Section~\ref{sec:inherited-eigenspace} showed that, for fixed update edges, the inherited eigenspace \(\mathcal U_\lambda\) does not depend on the positive update weights.
Emergence is different. 
The canonical emergent subspace \(\mathcal C_{\lambda,\Omega}\), and hence the updated eigenspace \(E_\lambda(L_{G+H})\), may depend on \(\Omega\).

In this section, we study this dependence through three questions: when is emergence possible, where does it occur in the positive weight space, and how many emergent directions can occur?

 Among our results, we identify the set of  weights that produce emergence and show that it has Lebesgue  measure zero. 
 We also  study the dimension of the emergent subspace at these exceptional weights.

We now introduce the objects used to make these statements precise.
Recall the orthogonal decomposition
$$
E_\lambda(L_{G+H})
=
\mathcal U_\lambda
\mathbin{\oplus^\perp}
\mathcal C_{\lambda,\Omega}.
$$
Since \(\mathcal U_\lambda\) is independent of \(\Omega\), all dependence on
the update weights occurs through \(\mathcal C_{\lambda,\Omega}\).
By~\eqref{eq:canonical-emergent-reconstruction},
$$
\dim\mathcal C_{\lambda,\Omega}
=
\dim\ker F_\lambda(\Omega).
$$

To make the dependence of $F_\lambda(\Omega)$ on the weights explicit, we
write
\begin{equation}
\label{eq:F-weight-decomposition}
F_\lambda(\Omega)
=
Z^T\Omega^{-1}Z+M_\lambda,
\end{equation}
where
\begin{equation}
\label{def:Hlambda}
M_\lambda:=Z^T\Gamma_\lambda Z.
\end{equation}
The matrix $M_\lambda\in\mathbb R^{d\times d}$ is symmetric and depends on $L_G$, $\lambda$, and the update edges, but not on their weights.

To refer concisely to the weights that produce emergence, we introduce the following terminology. 
The reason for the term ``exceptional'' will become clear later, when we show that such weights form a set of measure zero in the positive weight space.

\begin{definition}
\label{def:exceptional-weight}
A positive diagonal matrix \(\Omega\) is called \emph{exceptional for
\(\lambda\)} if
$
\mathcal C_{\lambda,\Omega}\neq\{0\}.
$
The diagonal of an exceptional \(\Omega\) is called a \emph{vector of  exceptional weights}, or \emph{exceptional weights} when the context is clear.
\end{definition}

Thus, \(\Omega\) is exceptional if and only if $\ker F_\lambda(\Omega)\neq\{0\}$.

Throughout the section, we assume that \(d:=\dim\mathcal K_\lambda=\dim\ker(Q_\lambda^TB)>0\).
If \(d=0\), then \(F_\lambda(\Omega)\) is empty, so no emergence is possible by Theorem \ref{thm:explicit-reconstruction}, and hence no exceptional \(\Omega\) exists.

\subsection{Existence of exceptional weights}
We begin by determining when some positive choice of weights can produce emergence. 
The next theorem gives a complete characterization in terms of $M_\lambda$. Moreover, if emergence is possible, then it is already possible with equal update weights.

\begin{theorem}[Existence of exceptional weights] \label{prop:existence-exceptional-weights}  Fix $L_G$, $\lambda \in \sigma(L_G)$ and $B$. Suppose that \(d>0\). Then  the following statements are equivalent: \begin{enumerate} \item[\rm(i)] There is a positive diagonal matrix $\Omega$ exceptional for $\lambda.$
\item[\rm(ii)] The symmetric matrix \(M_\lambda\) has a negative eigenvalue. \item[\rm(iii)] There exists \(\omega>0\) such that \(\omega I_p\) is exceptional for \(\lambda\). \end{enumerate} Moreover, for every \(\omega>0\), \begin{equation} \label{eq:equal-weight-emergent-dimension} \dim\mathcal C_{\lambda,\omega I_p} = \dim E_{-1/\omega}(M_\lambda). \end{equation} Consequently, $\omega I_p$ is exceptional if and only if $-1/\omega$ is a negative eigenvalue of $M_\lambda$. 
In particular, there are at most $d$ exceptional equal weights.
\end{theorem}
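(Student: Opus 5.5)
The whole argument rests on the decomposition \eqref{eq:F-weight-decomposition}, $F_\lambda(\Omega)=Z^T\Omega^{-1}Z+M_\lambda$. By Theorem~\ref{thm:explicit-reconstruction}, $\dim\mathcal C_{\lambda,\Omega}=\dim\ker F_\lambda(\Omega)$, so each statement reduces to a question about the kernel of this $d\times d$ symmetric matrix. We prove the displayed identity \eqref{eq:equal-weight-emergent-dimension} first, because it gives (ii)$\Leftrightarrow$(iii) and the final counting statement at once. We then close the cycle of equivalences with (iii)$\Rightarrow$(i), which is trivial, and (i)$\Rightarrow$(ii).

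For equal weights $\Omega=\omega I_p$, orthonormality of the columns of $Z$ gives $Z^T\Omega^{-1}Z=\omega^{-1}Z^TZ=\omega^{-1}I_d$. Hence
\[
F_\lambda(\omega I_p)=M_\lambda+\tfrac1\omega I_d
\quad\text{and}\quad
\ker F_\lambda(\omega I_p)=E_{-1/\omega}(M_\lambda).
\]
This is exactly \eqref{eq:equal-weight-emergent-dimension}. Consequently, $\omega I_p$ is exceptional precisely when $-1/\omega\in\sigma(M_\lambda)$. Since $\omega>0$, this requires a negative eigenvalue of $M_\lambda$. Conversely, if $\mu<0$ is an eigenvalue of $M_\lambda$, then $\omega=-1/\mu>0$ works. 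This proves (ii)$\Leftrightarrow$(iii). The map $\omega\mapsto-1/\omega$ is injective, and the symmetric matrix $M_\lambda$ has at most $d$ distinct eigenvalues, so there are at most $d$ exceptional equal weights.

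The implication (iii)$\Rightarrow$(i) holds because $\omega I_p$ is itself a positive diagonal matrix. The one step needing an actual argument is (i)$\Rightarrow$(ii), and we handle it with a quadratic form. Suppose $\Omega$ is exceptional, and take $0\neq c\in\ker F_\lambda(\Omega)$. Then
\[
0=c^TF_\lambda(\Omega)c=(Zc)^T\Omega^{-1}(Zc)+c^TM_\lambda c .
\]
Because $Z$ has full column rank, $Zc\neq0$, and because $\Omega^{-1}$ is positive definite, the first term is strictly positive. Therefore $c^TM_\lambda c<0$. By the Rayleigh quotient characterization, the smallest eigenvalue of the symmetric matrix $M_\lambda$ is at most $c^TM_\lambda c/\|c\|^2<0$, so $M_\lambda$ has a negative eigenvalue. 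This completes the cycle (i)$\Rightarrow$(ii)$\Rightarrow$(iii)$\Rightarrow$(i).
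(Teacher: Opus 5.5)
Your proof is correct and follows essentially the same route as the paper: the quadratic-form argument $c^TM_\lambda c=-(Zc)^T\Omega^{-1}(Zc)<0$ for (i)$\Rightarrow$(ii), the identity $F_\lambda(\omega I_p)=\omega^{-1}I_d+M_\lambda$ for (ii)$\Rightarrow$(iii) and for the dimension formula, and the trivial (iii)$\Rightarrow$(i). The only difference is that you prove the kernel identity first and read off (ii)$\Leftrightarrow$(iii) and the count of at most $d$ equal weights from it, which is just a reordering of the paper's steps.
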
 

\begin{proof} We first show that (i) implies (ii). Assume that \(\Omega\) is exceptional. Then, by Theorem \ref{thm:explicit-reconstruction}, there exists a nonzero vector \(c\) such that \( F_\lambda(\Omega)c=0\). By \eqref{eq:F-weight-decomposition}, \[ c^TM_\lambda c = -c^TZ^T\Omega^{-1}Zc <0. \] 
The inequality is strict because \(\Omega^{-1}\) is positive definite and \(Zc\neq0\). Hence, since $M_{\lambda}$ is symmetric,  it follows that this matrix has a negative eigenvalue. 

We next show that (ii) implies (iii).  Suppose that \(M_\lambda c=\rho c\) for some \(c\neq0\) and \(\rho<0\). Set \(\omega=-1/\rho\). Since \(Z^TZ=I_d\), we have \(F_\lambda(\omega I_p)=\omega^{-1}I_d+M_\lambda\), and therefore \(F_\lambda(\omega I_p)c=0\). Thus, \(\omega I_p\) is exceptional,  by Theorem \ref{thm:explicit-reconstruction}. 

The implication from (iii) to (i) is immediate. 

Finally, let \(\omega>0\). By Theorem~\ref{thm:explicit-reconstruction},
\(
\dim \mathcal C_{\lambda,\omega I_p}
=
\dim\ker F_\lambda(\omega I_p).
\)
Since
\(
F_\lambda(\omega I_p)
=
\omega^{-1}I_d+M_\lambda,
\)
we have
\[
\ker F_\lambda(\omega I_p)
=
\ker\bigl(M_\lambda+\omega^{-1}I_d\bigr)
=
E_{-1/\omega}(M_\lambda).
\]
Therefore,
\[
\dim\mathcal C_{\lambda,\omega I_p}
=
\dim E_{-1/\omega}(M_\lambda),
\]
which proves \eqref{eq:equal-weight-emergent-dimension}.

It follows that $\omega I_p$ is exceptional if and only if $E_{-1/\omega}(M_\lambda)\neq\{0\}$ or, equivalently, if and only if $-1/\omega$ is an eigenvalue of $M_\lambda$. 
Since $\omega>0$, this eigenvalue is negative. 
Thus, each exceptional equal weight $\omega=-1/\rho$ is determined by a negative eigenvalue $\rho$ of $M_\lambda$.
Since $M_\lambda$ is a $d\times d$ matrix, there are at most $d$ such exceptional  weights.
\end{proof}

Theorem~\ref{prop:existence-exceptional-weights} shows that the equal-weight family detects whether emergence is possible. 
In fact, the same is true for any fixed relative choice of positive update weights. 
More precisely, if $\Omega_0$ is any fixed positive diagonal matrix, there exists a positive diagonal matrix exceptional for $\lambda$ if and only if $t\Omega_0$ is exceptional for some $t>0$.
The next result also characterizes all such values of $t$.

\begin{proposition}[Fixed relative update weights]
\label{prop:fixed-relative-weights}
 Fix $L_G$, $\lambda \in \sigma(L_G)$ and $B$. Suppose that $d>0$, and let $\Omega_0$ be a fixed positive diagonal matrix. 
Then there exists a positive diagonal matrix $\Omega$ exceptional for $\lambda$ if and only if $t\Omega_0$ is exceptional for some $t>0$.
Moreover, $t\Omega_0$ is exceptional if and only if $-1/t$ is a negative eigenvalue of
$$
\widetilde M_{\lambda,\Omega_0}
=
T_0^{-1/2}M_\lambda T_0^{-1/2},
\qquad
T_0=Z^T\Omega_0^{-1}Z.
$$
In particular, there are at most $d\leq p$ exceptional positive scale factors $t$.
\end{proposition}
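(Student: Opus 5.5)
The plan is to reduce the scaled family $t\Omega_0$ to an ordinary symmetric eigenvalue problem for a congruent matrix, in the same way Theorem~\ref{prop:existence-exceptional-weights} handled the equal-weight family. By \eqref{eq:F-weight-decomposition},
\[
F_\lambda(t\Omega_0)=t^{-1}Z^T\Omega_0^{-1}Z+M_\lambda=t^{-1}T_0+M_\lambda .
\]
The first step is to note that $T_0=Z^T\Omega_0^{-1}Z$ is symmetric positive definite. Indeed, $\Omega_0^{-1}$ is positive definite and $Z$ has orthonormal, hence linearly independent, columns. So $T_0^{1/2}$ and $T_0^{-1/2}$ are well defined, symmetric, and invertible. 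Factoring gives
\[
F_\lambda(t\Omega_0)=T_0^{1/2}\bigl(t^{-1}I_d+\widetilde M_{\lambda,\Omega_0}\bigr)T_0^{1/2}.
\]
Hence $\ker F_\lambda(t\Omega_0)=T_0^{-1/2}\,E_{-1/t}(\widetilde M_{\lambda,\Omega_0})$, and the two spaces have the same dimension.

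The second step combines this with Theorem~\ref{thm:explicit-reconstruction}, or equivalently Corollary~\ref{cor:weighted-algebraic-multiplicity}. Exceptionality of $t\Omega_0$ is equivalent to $\ker F_\lambda(t\Omega_0)\neq\{0\}$, and therefore to $-1/t\in\sigma(\widetilde M_{\lambda,\Omega_0})$. Since $t>0$, this eigenvalue is automatically negative, which proves the ``moreover'' statement. As a byproduct, $\dim\mathcal C_{\lambda,t\Omega_0}=\dim E_{-1/t}(\widetilde M_{\lambda,\Omega_0})$. The map $\rho\mapsto -1/\rho$ is a bijection from the negative eigenvalues of the $d\times d$ symmetric matrix $\widetilde M_{\lambda,\Omega_0}$ onto the exceptional scale factors. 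So there are at most $d$ such factors, and $d=p-\rank(B^TQ_\lambda)\le p$.

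The third step handles the first equivalence. If $t\Omega_0$ is exceptional for some $t$, then trivially some positive diagonal matrix is exceptional. Conversely, suppose some $\Omega$ is exceptional. By Theorem~\ref{prop:existence-exceptional-weights}, $M_\lambda$ has a negative eigenvalue. By Sylvester's law of inertia, the congruent matrix $\widetilde M_{\lambda,\Omega_0}=T_0^{-1/2}M_\lambda T_0^{-1/2}$ also has a negative eigenvalue $\rho$. Alternatively, one can argue directly: if $c^TM_\lambda c<0$, then $u=T_0^{1/2}c$ satisfies $u^T\widetilde M_{\lambda,\Omega_0}u<0$. Taking $t=-1/\rho$, the previous step shows that $t\Omega_0$ is exceptional.

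No step is a serious obstacle. The only points needing care are the positive definiteness of $T_0$, which is what allows the symmetric square root and the congruence, and invoking inertia rather than claiming that the eigenvalues themselves are preserved.
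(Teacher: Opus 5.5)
Your proposal is correct and follows essentially the same route as the paper. It factors $F_\lambda(t\Omega_0)=T_0^{1/2}\bigl(t^{-1}I_d+\widetilde M_{\lambda,\Omega_0}\bigr)T_0^{1/2}$, reads off exceptionality from $E_{-1/t}(\widetilde M_{\lambda,\Omega_0})$, and then uses Sylvester's law of inertia together with Theorem~\ref{prop:existence-exceptional-weights} to obtain the first equivalence.
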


\begin{proof}
Since $Z$ has full column rank and $\Omega_0$ is positive diagonal, $T_0$ is positive definite. 
Hence,
$$
F_\lambda(t\Omega_0)
=
t^{-1}T_0+M_\lambda
=
T_0^{1/2}
\left(
t^{-1}I_d+\widetilde M_{\lambda,\Omega_0}
\right)
T_0^{1/2}.
$$
Taking into account that $T_0^{1/2}$ is invertible, by Corollary~\ref{cor:basis-updated-eigenspace},
$$
\dim\mathcal C_{\lambda,t\Omega_0}
=
\dim\ker F_\lambda(t\Omega_0)
=
\dim\ker
\left(
t^{-1}I_d+\widetilde M_{\lambda,\Omega_0}
\right)
=
\dim E_{-1/t}
\bigl(\widetilde M_{\lambda,\Omega_0}\bigr).
$$

Therefore, $t\Omega_0$ is exceptional for $\lambda$ if and only if $-1/t$ is a negative eigenvalue of $\widetilde M_{\lambda,\Omega_0}$.

Moreover, $\widetilde M_{\lambda,\Omega_0}$ is congruent to $M_\lambda$. 
By Sylvester's law of inertia, $\widetilde M_{\lambda,\Omega_0}$ has a negative eigenvalue if and only if $M_\lambda$ has a negative eigenvalue. 
By Theorem~\ref{prop:existence-exceptional-weights}, this holds if and only if some positive diagonal matrix $\Omega$ is exceptional for $\lambda$.

Finally, since $\widetilde M_{\lambda,\Omega_0}$ is a $d\times d$ matrix, there are at most $d\leq p$ exceptional positive scale factors $t$.
\end{proof}

Theorem~\ref{prop:existence-exceptional-weights} gives a spectral condition for exceptional weights to exist. 
The next result gives an additional restriction on their size.

\begin{corollary}
 Fix $L_G$, $\lambda \in \sigma(L_G)$ and $B$. Assume that $\Omega$  is exceptional for $\lambda$.  Then $\lambda_{\min}(M_{\lambda})<0$ and 
$\omega_{\max}\geq 
-\frac{1}{\lambda_{\min}(M_\lambda)}$, where \(\omega_{\max}\) is the largest diagonal entry of
\(\Omega\).

\end{corollary}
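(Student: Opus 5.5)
The plan is to start from a nonzero vector in $\ker F_\lambda(\Omega)$, which exists by Theorem~\ref{thm:explicit-reconstruction} because $\Omega$ is exceptional. Fix such a $c\neq 0$, normalized so that $\|c\|_2=1$. By \eqref{eq:F-weight-decomposition}, $F_\lambda(\Omega)c=0$ gives
\[
c^TM_\lambda c=-c^TZ^T\Omega^{-1}Zc .
\]
The first claim, $\lambda_{\min}(M_\lambda)<0$, is then the implication (i)$\Rightarrow$(ii) of Theorem~\ref{prop:existence-exceptional-weights}. Concretely, $Zc\neq0$ because $Z$ has orthonormal columns, and $\Omega^{-1}$ is positive definite, so the right-hand side is strictly negative. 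The Rayleigh quotient bound $\lambda_{\min}(M_\lambda)\le c^TM_\lambda c$ then finishes this part.

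For the size bound, we estimate the quadratic form $c^TZ^T\Omega^{-1}Zc$ from below. Every diagonal entry of $\Omega^{-1}$ is at least $1/\omega_{\max}$, so $\Omega^{-1}\succeq \omega_{\max}^{-1}I_p$. Since $Z^TZ=I_d$, we have $\|Zc\|_2=\|c\|_2=1$, and hence
\[
c^TZ^T\Omega^{-1}Zc\ \ge\ \frac{1}{\omega_{\max}}.
\]
Combining this with the identity from the first paragraph gives
\[
\lambda_{\min}(M_\lambda)\le c^TM_\lambda c\le -\frac{1}{\omega_{\max}} .
\]

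It remains to rearrange. Both $\lambda_{\min}(M_\lambda)$ and $-1/\omega_{\max}$ are negative, so the inequality $\lambda_{\min}(M_\lambda)\le -1/\omega_{\max}$ is equivalent to $1/\omega_{\max}\le -\lambda_{\min}(M_\lambda)$. Taking reciprocals of these positive quantities reverses the inequality, which yields $\omega_{\max}\ge -1/\lambda_{\min}(M_\lambda)$. There is no real obstacle here. The only points needing care are keeping track of the sign when taking reciprocals, and using the normalization $\|Zc\|_2=1$, which relies on the orthonormality of $Z$.
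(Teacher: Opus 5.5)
Your proof is correct and is essentially the paper's argument. The paper bounds $F_\lambda(\Omega)\succeq\bigl(\omega_{\max}^{-1}+\lambda_{\min}(M_\lambda)\bigr)I_d$ in the Loewner order and then invokes singularity, which is the same estimate you get by evaluating the quadratic form at a unit kernel vector $c$; your chain $\lambda_{\min}(M_\lambda)\le c^TM_\lambda c\le -1/\omega_{\max}$ also yields both claims at once.
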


\begin{proof}
The first claim follows immediately from Theorem \ref{prop:existence-exceptional-weights}. Now we prove the second claim. Since 
\(\Omega^{-1}\succeq \omega_{\max}^{-1}I_p\) and 
\(Z^TZ=I_d\),
\[
Z^T\Omega^{-1}Z
\succeq
\frac{1}{\omega_{\max}}I_d.
\]
Thus, by \eqref{eq:F-weight-decomposition}, we have
\[
F_\lambda(\Omega)
\succeq
\left(
\frac{1}{\omega_{\max}}
+
\lambda_{\min}(M_\lambda)
\right)I_d.
\]
Since $\Omega$ is exceptional for $\lambda$, $F_\lambda(\Omega)$ is singular by Corollary \ref{cor:weighted-algebraic-multiplicity}. 
Therefore, 
$$
\frac{1}{\omega_{\max}}
+
\lambda_{\min}(M_\lambda)
\leq 0,
$$
which completes the proof.
\end{proof}

We conclude this subsection by specializing Theorem \ref{prop:existence-exceptional-weights} to a single updated edge. 
In this case, the condition for emergence takes a particularly simple form.

\begin{corollary}[Single-edge updates]
\label{cor:single-edge-exceptional-weight}
Let $\lambda \in \sigma(L_G)$. Suppose that \(p=1\), and let \(b\) be the oriented update-edge vector.
An exceptional edge weight $\omega$ for $\lambda$ exists if and only if  \[Q_\lambda^Tb=0\quad \text{and} \quad
b^TG_\lambda b<0.
\]
In this case, the exceptional weight is unique and is given by
\[
\omega
=
-\frac{1}{b^TG_\lambda b}.
\]
\end{corollary}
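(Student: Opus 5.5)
The plan is to specialize the general machinery of this section to $p=1$, so that $B=b$, $\Omega=[\omega]$, and every object involved is at most $1\times1$. I would split according to the value of $d=\dim\mathcal K_\lambda=1-\rank(b^TQ_\lambda)$. The first step is to note that $d\in\{0,1\}$, and that $d=1$ holds exactly when $b^TQ_\lambda=0$, that is, when $Q_\lambda^Tb=0$.

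If $Q_\lambda^Tb\neq0$, then $d=0$. As remarked just before the existence subsection, $F_\lambda(\Omega)$ is then empty, so $\ker F_\lambda(\Omega)=\{0\}$ for every $\omega>0$. By Corollary~\ref{cor:weighted-algebraic-multiplicity} no weight is exceptional, which gives necessity of the first condition.

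If $Q_\lambda^Tb=0$, then $\mathcal K_\lambda=\mathbb R$ and I will take $Z=[1]$. This gives $M_\lambda=Z^T\Gamma_\lambda Z=b^TG_\lambda b$, a scalar, and by \eqref{eq:F-weight-decomposition}
\[
F_\lambda([\omega])=\frac1\omega+b^TG_\lambda b .
\]
Theorem~\ref{prop:existence-exceptional-weights}, (i)$\Leftrightarrow$(ii), applies with $d=1>0$. It says an exceptional weight exists if and only if the $1\times1$ symmetric matrix $M_\lambda$ has a negative eigenvalue, i.e.\ $b^TG_\lambda b<0$. Because $p=1$, every positive diagonal $\Omega$ is of the form $\omega I_1$. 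So the final statement of that theorem shows $\omega$ is exceptional exactly when $-1/\omega$ is an eigenvalue of $M_\lambda$, i.e.\ $-1/\omega=b^TG_\lambda b$. Equivalently, one can solve $F_\lambda([\omega])=0$ directly. Either way the weight is unique and equals $\omega=-1/(b^TG_\lambda b)$.

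Combining the two cases gives the stated equivalence and formula. There is no real obstacle. The only point needing care is making sure the $d=0$ case is handled by the empty-matrix convention, rather than invoking Theorem~\ref{prop:existence-exceptional-weights}, whose standing assumption is $d>0$.
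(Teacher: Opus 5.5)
Your proposal is correct and follows essentially the same route as the paper. The paper splits on whether $Q_\lambda^Tb=0$, handles $d=0$ via the empty reduced matrix, and for $d=1$ takes $Z=[1]$ and solves $F_\lambda(\Omega)=\omega^{-1}+b^TG_\lambda b=0$ directly. Your detour through Theorem~\ref{prop:existence-exceptional-weights} is just an equivalent repackaging of that same step.
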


\begin{proof}
If \(Q_\lambda^Tb\neq0\), then
\(\mathcal K_\lambda=\ker Q_{\lambda}^Tb=\{0\}\), and hence \(d=\dim \mathcal K_\lambda=0\) and  no exceptional weights exist. 

Suppose now that \(Q_\lambda^Tb=0\). Then
\(\mathcal K_\lambda=\mathbb R\), and we may take \(Z=[1]\). Thus,
\[
F_\lambda(\Omega)
=
\omega^{-1}+b^TG_\lambda b.
\]
By Theorem \ref{thm:explicit-reconstruction},  $\omega$ is exceptional for $\lambda$ if and only if $F_{\lambda}(\Omega)=0.$ Since $\omega>0$, this equation has a solution if and only if $b^TG_\lambda b<0$.
In this case, the solution is unique and is given by $\omega
=
-\frac{1}{b^TG_\lambda b}.
$ 
\end{proof}

So far, we have studied when exceptional weights exist and which weights are exceptional along fixed rays.
We now describe the exceptional set in the full positive weight space.
\subsection{Geometry of the exceptional weight set}

We now describe the set of exceptional weight vectors within the space \((0,\infty)^p\) of all positive weight assignments. We will show that it is the zero set of a nonzero polynomial and, consequently, has Lebesgue measure zero. 
We will then study the local structure of this zero set and relate it to the dimension of the canonical emergent subspace.

 For simplicity, in this section we write $\omega_k:=\omega_{i_kj_k}$ for the weight of the update edge $\{i_k,j_k\}$.

Recall that $\Omega$ is exceptional for $\lambda$ if and only if $\det F_\lambda(\Omega)=0$.
Since
$$
F_\lambda(\Omega)
=
Z^T\Omega^{-1}Z+M_\lambda,
$$
it is natural to introduce the reciprocal weights
$$
s_i:=\frac{1}{\omega_i},
\qquad i=1,\ldots,p.
$$
Writing $s=(s_1,\ldots,s_p)$, we define
\begin{equation}\label{widehatF}
\widehat{F}_{\lambda}(s):=F_{\lambda}(\Omega)= \sum_{i=1}^p (Z^Te_i e_i^TZ) s_i +M_{\lambda}. 
\end{equation}
We then define
\begin{equation}\label{def:plambda}
p_\lambda(s):=\det\widehat F_\lambda(s).
\end{equation}
Thus, $\Omega$ is exceptional if and only if $p_\lambda(s)=0$.

The next lemma gives two properties of $p_\lambda$ that we will need.

\begin{lemma}
\label{lem:polynomial-exceptional-weights}
The polynomial \(p_\lambda(s)\) is  not identically zero and is affine in each variable.

\end{lemma}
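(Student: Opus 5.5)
Write $z_i:=Z^Te_i\in\mathbb R^d$ for the $i$th row of $Z$ (as a column vector). Then
\[
\widehat F_\lambda(s)=\sum_{i=1}^p s_i z_iz_i^T+M_\lambda
=Z^T\operatorname{diag}(s)Z+M_\lambda ,
\]
so $\widehat F_\lambda$ is affine in $s$ and every $s$-dependent part consists of rank-one terms. Both claims of the lemma will be read off from this expression.

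\textbf{Affine in each variable.} Fix all variables except $s_k$ and write $\widehat F_\lambda(s)=A_k+s_kz_kz_k^T$, where $A_k$ does not depend on $s_k$. The matrix determinant lemma, in its adjugate form valid for every square $A_k$, gives
\[
\det\bigl(A_k+s_kz_kz_k^T\bigr)=\det A_k+s_k\,z_k^T\operatorname{adj}(A_k)z_k .
\]
This is affine in $s_k$, with coefficients polynomial in the remaining variables. Alternatively, $\det$ is multilinear in columns, and $s_k$ enters only through the rank-one term, so every monomial of the expansion has degree at most one in $s_k$. Either argument works, and this step is routine.

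\textbf{Not identically zero.} By the Cauchy--Binet formula, the top-degree part of $p_\lambda$ is
\[
\det\bigl(Z^T\operatorname{diag}(s)Z\bigr)=\sum_{|S|=d}\Bigl(\prod_{i\in S}s_i\Bigr)\det(Z_S)^2 ,
\]
where $Z_S$ is the $d\times d$ submatrix of rows indexed by $S$. Every monomial here has degree exactly $d$. The matrix $M_\lambda$ only contributes monomials of degree less than $d$, since each such term uses at least one column of $M_\lambda$ and therefore at most $d-1$ factors $s_i$. Hence the degree-$d$ homogeneous component of $p_\lambda$ is exactly this sum. Because $Z$ has orthonormal, hence linearly independent, columns, some $d\times d$ minor $\det Z_S$ is nonzero. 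The coefficient $\det(Z_S)^2>0$ of the multilinear monomial $\prod_{i\in S}s_i$ is then nonzero, so $p_\lambda\not\equiv0$.

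The only point needing care is separating the degree-$d$ component cleanly from the contribution of $M_\lambda$. A cleaner route is to note that $p_\lambda(ts)/t^d\to\det(Z^T\operatorname{diag}(s)Z)$ as $t\to\infty$. Evaluating at $s=\mathbf 1_p$ gives $\det(Z^TZ)=\det I_d=1\neq0$, which shows $p_\lambda$ is nonzero without extracting any minor. I would present this limit argument as the main proof and mention Cauchy--Binet as an explicit description of the leading coefficients.
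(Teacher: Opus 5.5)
Your proof is correct and matches the paper's: the affine claim uses the same rank-one adjugate identity. Your limit argument at $s=\mathbf 1_p$ is the paper's observation that $p_\lambda(t,\ldots,t)=\det(tI_d+M_\lambda)$ is monic of degree $d$ because $Z^TZ=I_d$, stated asymptotically rather than exactly; the Cauchy--Binet description of the degree-$d$ component is correct but not needed for the lemma.
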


\begin{proof}
Fix all variables
except \(s_i\). Then there exists a matrix $A$ independent of $s_i$ such that
\[\widehat F_\lambda(s)=A+s_i\zeta_i\zeta_i^T, \qquad \zeta_i = Z^Te_i.\]
The rank-one determinant identity gives 
\[
\det(A+s_i\zeta_i\zeta_i^T)
=
\det(A)
+
s_i\zeta_i^T\operatorname{adj}(A)\zeta_i.
\]
Thus, for every $i$, \(p_\lambda\) is affine in \(s_i\). 

To show that $p_\lambda$ is not identically zero, set $s_1=\cdots=s_p=t$. Since $Z^TZ=I_d$,
$$
\widehat F_\lambda(t,\ldots,t)
=
tI_d+M_\lambda.
$$
Therefore,
$$
p_\lambda(t,\ldots,t)
=
\det(tI_d+M_\lambda),
$$
which is a monic polynomial of degree $d$. 
Hence, $p_\lambda$ is not identically zero.
\end{proof}

We now use the polynomial $p_{\lambda}(s)$ to describe the exceptional set. 
The next theorem shows, in particular, that emergence is nongeneric with respect to the positive update weights.

\begin{theorem}[Polynomial characterization of the exceptional set]
\label{cor:one-weight-exceptional}
Suppose that \(d>0\). 
Then a positive weight vector $(\omega_1, \ldots, \omega_p)$  is
exceptional for \(\lambda\) if and only if
\(p_\lambda(s_1, \ldots, s_p)=0\), where $s_i=1/\omega_i$.
Consequently, the set of  exceptional  weight vectors has 
Lebesgue measure zero in \((0,\infty)^p\).
\end{theorem}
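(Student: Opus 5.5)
The first claim is a direct restatement of what precedes it. By Definition~\ref{def:exceptional-weight} and Corollary~\ref{cor:weighted-algebraic-multiplicity}, $\Omega$ is exceptional for $\lambda$ exactly when $\ker F_\lambda(\Omega)\neq\{0\}$. Since $F_\lambda(\Omega)$ is a square $d\times d$ matrix with $d>0$, this happens exactly when $\det F_\lambda(\Omega)=0$. By \eqref{widehatF} and \eqref{def:plambda}, $\det F_\lambda(\Omega)=\det\widehat F_\lambda(s)=p_\lambda(s)$ with $s_i=1/\omega_i$. This proves the equivalence.

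For the measure-zero statement, I would pass to reciprocal coordinates. Consider the map $\phi:(0,\infty)^p\to(0,\infty)^p$, $\phi(\omega)=(1/\omega_1,\ldots,1/\omega_p)$. It is a $C^\infty$ diffeomorphism and is its own inverse. By the first part, the exceptional set equals $\phi^{-1}(N)$, where
\[
N:=\{s\in(0,\infty)^p:p_\lambda(s)=0\}.
\]
A $C^1$ diffeomorphism between open subsets of $\mathbb R^p$ maps Lebesgue-null sets to Lebesgue-null sets, because it is locally Lipschitz and the domain is covered by countably many compact pieces. So it suffices to show that $N$ has measure zero.

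By Lemma~\ref{lem:polynomial-exceptional-weights}, $p_\lambda$ is a nonzero polynomial on $\mathbb R^p$. The one step needing an argument is the standard fact that the zero set of a nonzero real polynomial has measure zero. I would prove it by induction on $p$ using Fubini.
\begin{itemize}
\item For $p=1$, the zero set is finite.
\item For general $p$, write $p_\lambda(s)=\sum_k c_k(s_1,\ldots,s_{p-1})s_p^k$ with some $c_k\not\equiv0$. By induction, the set $A$ of $(s_1,\ldots,s_{p-1})$ where that $c_k$ vanishes is null.
\item For each point outside $A$, the slice in $s_p$ is a nonzero univariate polynomial, so it has finitely many roots and the slice is null.
\item Integrating slices by Fubini gives measure zero.
\end{itemize}
The affine-in-each-variable structure simplifies this further, since each slice has at most one root, but it is not needed.

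I do not expect a real obstacle. The only care needed is the transfer of null sets under the reciprocal map, and the fact that $p_\lambda$ is nonzero as a polynomial, which Lemma~\ref{lem:polynomial-exceptional-weights} already provides.
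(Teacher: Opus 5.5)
Your proof is correct. The first part is exactly the paper's argument, but you handle the measure-zero claim by a different route.

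The paper stays in the original weights. It clears denominators, forming $\widehat p_\lambda(\omega)=(\omega_1\cdots\omega_p)\,p_\lambda(1/\omega_1,\ldots,1/\omega_p)$, which is a polynomial because $p_\lambda$ is affine in each variable. Since $\omega_1\cdots\omega_p>0$ on $(0,\infty)^p$, the zeros of $\widehat p_\lambda$ there are exactly the exceptional vectors. The paper then simply quotes the fact that a nonzero real polynomial has a null zero set.

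You instead keep $p_\lambda$ in the reciprocal variables and transport its zero set through the smooth involution $\phi$. You also prove the null-zero-set fact yourself by Fubini.

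Your route does not use multi-affinity at all. It also avoids the check that $\widehat p_\lambda$ is again a nonzero polynomial, which is easy but left implicit in the paper. The cost is invoking the fact that $C^1$ diffeomorphisms preserve Lebesgue-null sets. The paper's route is shorter and yields an explicit polynomial in the actual weights. That polynomial is, up to a constant, the object later matched with $q_\lambda$ in Remark~\ref{rem:alternative-determinant}.
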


\begin{proof}
The first claim follows from the definition of \(p_\lambda\).
 By Lemma~\ref{lem:polynomial-exceptional-weights},
\(p_\lambda\) is a nonzero polynomial.
Clearing denominators in
\(p_\lambda(1/\omega_1,\ldots,1/\omega_p)\) and taking into account that $p_{\lambda}$ is affine in each variable, we get the nonzero
polynomial
\begin{equation}\label{eqhatqlambda}
\widehat{p}_\lambda(\omega_1,\ldots,\omega_p)
:=
(\omega_1\cdots\omega_p)
p_\lambda\left(\frac{1}{\omega_1},\ldots,\frac{1}{\omega_p}\right).
\end{equation}
Thus, a positive
weight vector is exceptional if and only if it is a zero
of \(\widehat{p}_\lambda\).
The conclusion follows because the zero set of a nonzero
real polynomial has Lebesgue measure zero.
\end{proof}
Thus, for fixed $L_G$, $\lambda$, and update edges, emergence occurs only for a measure-zero set of positive update weights.
This justifies the term \emph{exceptional weights}: for almost every choice of positive weights, the canonical emergent subspace is zero.

\begin{remark}[An alternative determinant formulation]
\label{rem:alternative-determinant}

The preceding theorem characterizes exceptional weights using
\(p_\lambda\), a polynomial in the reciprocal weights \(s_i\).
A polynomial characterization can also be obtained directly in the update
weights \(\omega_i\). A natural first attempt is to use
\[
\det\bigl(L_G-\lambda I_n+B\Omega B^T\bigr)
\]
for this purpose. If \(\mathcal U_\lambda\neq\{0\}\), however,
this determinant vanishes for every positive diagonal matrix
\(\Omega\), because every inherited vector belongs to the kernel of
\(L_G-\lambda I_n+B\Omega B^T\).
Thus, the full determinant does not distinguish emergence from
preservation through inheritance.

To remove the inherited subspace, let
\(h:=\dim\mathcal U_\lambda\), and choose
\(V\in\mathbb R^{n\times(n-h)}\) with orthonormal columns spanning
\(\mathcal U_\lambda^\perp\). Since \(\mathcal U_\lambda\) is
independent of the positive update weights, \(V\) can also be
chosen independently of them. Define
\[
q_\lambda(\omega_1,\ldots,\omega_p)
:=
\det\left(
V^T\bigl(L_G-\lambda I_n+B\Omega B^T\bigr)V
\right).
\]
When \(\mathcal U_\lambda=\{0\}\), we may take \(V=I_n\), so
\(q_\lambda\) reduces to the full determinant above. Since \(L_{G+H}-\lambda I_n\) is symmetric and vanishes
on \(\mathcal U_\lambda\), it leaves
\(\mathcal U_\lambda^\perp\) invariant. Consequently,
multiplication by \(V\) maps the kernel of
\(
V^T\bigl(L_G-\lambda I_n+B\Omega B^T\bigr)V
\)
isomorphically onto \(\mathcal C_{\lambda,\Omega}\). Hence,
\[
q_\lambda(\omega_1,\ldots,\omega_p)=0
\quad\Longleftrightarrow\quad
\Omega\text{ is exceptional for }\lambda.
\]

Moreover, \(q_\lambda\) and \(p_\lambda\) give the same polynomial description of the exceptional set, up to the reciprocal change of variables and a nonzero constant factor. More precisely,
\[
q_\lambda(\omega_1,\ldots,\omega_p)
=
c_\lambda(\omega_1\cdots\omega_p)
p_\lambda\left(
\frac{1}{\omega_1},\ldots,\frac{1}{\omega_p}
\right),
\]
where \(c_\lambda\neq0\) is independent of the update weights. 
This identity is stronger than the fact that the two polynomials have the same positive zero set and does not follow from that fact alone. 
It can be proved by a block determinant calculation using orthonormal bases adapted to \(\mathcal U_\lambda\) and \(\mathcal K_\lambda\), together with Schur complements.

Since \(p_\lambda\) is nonzero, the identity shows that \(q_\lambda\) is also a nonzero polynomial.
Its positive zeros are precisely the exceptional weight vectors.

We work with \(p_\lambda\) because it arises from the reduced
matrix \(F_\lambda(\Omega)\), whose order is \(d\leq p\), compared
with \(n-h\) for the matrix defining \(q_\lambda\). Moreover,
\(F_\lambda(\Omega)\) is the same reduced matrix used to determine
emergence and its multiplicity and to reconstruct the canonical
emergent subspace. Its explicit dependence on the reciprocal
weights therefore provides a convenient unified framework for
the analysis in this section.

\end{remark}

The fact that the polynomial $p_\lambda$ is affine in each variable also restricts the shape of the exceptional set along each coordinate direction. 
Once all but one weight are fixed, there is at most one exceptional value of the remaining weight, unless all positive values are exceptional.

\begin{corollary}\label{cor:fix-one-variable}
 Fix all update weights except one, say \(\omega_i\). Then the set of
positive values of \(\omega_i\) for which the resulting weight vector $(\omega_1, \ldots, \omega_p)$
is exceptional is empty, a singleton, or all of \((0,\infty)\).

\end{corollary}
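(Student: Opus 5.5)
With all other weights fixed, we reduce to a one-variable problem through the polynomial $p_\lambda$. By Theorem~\ref{cor:one-weight-exceptional}, a positive weight vector is exceptional if and only if $p_\lambda(s_1,\ldots,s_p)=0$, where $s_k=1/\omega_k$. Fix $s_k=1/\omega_k$ for all $k\neq i$ and regard $g(s_i):=p_\lambda(s_1,\ldots,s_p)$ as a function of $s_i$ alone. By Lemma~\ref{lem:polynomial-exceptional-weights}, $p_\lambda$ is affine in $s_i$. Hence there are constants $\alpha,\beta\in\mathbb R$, depending only on the fixed weights and on $L_G,\lambda,B$, such that
\[
g(s_i)=\alpha+\beta s_i .
\]
Concretely, $\alpha=\det A$ and $\beta=\zeta_i^T\operatorname{adj}(A)\zeta_i$, as in that lemma's proof. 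The explicit form is not needed; only affineness matters.

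The problem is then to describe the zero set of $g$ on $(0,\infty)$. We split into three cases.
\begin{enumerate}
\item[(a)] If $\alpha=\beta=0$, then $g\equiv0$, so every $s_i>0$ is a zero.
\item[(b)] If $\beta=0$ and $\alpha\neq0$, then $g$ has no zeros.
\item[(c)] If $\beta\neq0$, then $g$ has the unique real zero $s_i^\ast=-\alpha/\beta$. This zero lies in $(0,\infty)$ precisely when $\alpha/\beta<0$, so the positive zero set is either empty or the singleton $\{s_i^\ast\}$.
\end{enumerate}

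Finally, we transfer the conclusion back to $\omega_i$. The map $\omega_i\mapsto s_i=1/\omega_i$ is a bijection of $(0,\infty)$ onto itself. Therefore the set of exceptional positive values of $\omega_i$ is the image of the positive zero set of $g$ under this bijection. That image is all of $(0,\infty)$ in case (a), empty in case (b), and either empty or the singleton $\{-\beta/\alpha\}$ in case (c).

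No step is a real obstacle. The only point needing care is that affineness holds for the \emph{restricted} function $g$, not just for $p_\lambda$ as a polynomial. This is exactly what Lemma~\ref{lem:polynomial-exceptional-weights} gives, since its proof fixes all variables except $s_i$. We also note that $g\equiv0$ is genuinely possible even though $p_\lambda\not\equiv0$, which is why the third alternative must be allowed.
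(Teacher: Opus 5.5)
Your proof is correct and follows the paper's argument: restrict $p_\lambda$ to the single reciprocal variable $s_i=1/\omega_i$, use the affineness from Lemma~\ref{lem:polynomial-exceptional-weights} to conclude that its positive zero set is empty, a single point, or all of $(0,\infty)$, and transfer this back to $\omega_i$ via the bijection $s_i\mapsto 1/s_i$ of $(0,\infty)$. Your explicit three-case split, and your remark that the restricted function can vanish identically even though $p_\lambda\not\equiv 0$ (as happens in Example~\ref{ex:large-cycle-three-edges}), spell out in detail what the paper states in one line.
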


\begin{proof}

Fixing all the weights except \(\omega_i\) is equivalent to fixing all
reciprocal variables except \(s_i=1/\omega_i\). 
By
Lemma~\ref{lem:polynomial-exceptional-weights},
\(p_\lambda\) is affine in \(s_i\), that is,
\(
p_\lambda(s)=as_i+b,
\)
where $a$ and $b$  are constant. Thus, the set of positive zeros of $p_{\lambda}(s)$  is empty, a singleton, or all of $(0,\infty)$. Since $s_i=1/\omega_i$ is a bijection from $(0,\infty)$ onto itself, the same holds for the exceptional values of $\omega_i$.
\end{proof}

The
following small example gives an explicit description of the set of vectors of exceptional weights.

\begin{example}\label{ex:equal-unequal-exceptional-weights} 
Consider the path on three vertices  with unit edge weights, and its Laplacian eigenvalue \(\lambda=3\). Increase the weights of the two existing edges \(\{1,2\}\) and \(\{2,3\}\) by \(\omega_1,\omega_2>0\), respectively. We can take
\[ B= \begin{bmatrix} 1&0\\ -1&1\\ 0&-1 \end{bmatrix}, \qquad \Omega=\operatorname{diag}(\omega_1,\omega_2). \] 
The vector \(q=\frac1{\sqrt6}(1,-2,1)^T\) spans \(E_3(L_G)\), and \(\mathcal K_3=\operatorname{span}\{(1,1)\}\). Taking \(Z=\frac1{\sqrt2}(1,1)^T\), one obtains 
\[ \Gamma_3 = -\frac14 \begin{bmatrix} 1&1\\ 1&1 \end{bmatrix}, \qquad M_3=\left[-\frac12\right], \qquad F_3(\Omega) = \left[\frac12 \left( \frac1{\omega_1}+\frac1{\omega_2}-1 \right)\right]. \] 
Therefore, emergence occurs precisely when \begin{equation} \label{eq:P3-exceptional-weight-curve} \frac1{\omega_1}+\frac1{\omega_2}=1, \end{equation} 
or equivalently, 
$$s_1+s_2=1.$$

This equation has infinitely many positive solutions.  At every point on the exceptional curve, \(F_3(\Omega)=[0]\). Since \(F_3(\Omega)\) is \(1\times1\), it follows that \(\dim\ker F_3(\Omega)=1\). Hence \(\dim\mathcal C_{3,\Omega}=1\), and every exceptional weight vector produces exactly one canonical emergent direction.
 The unique exceptional equal weight is \(\omega_1=\omega_2=2\), in agreement with Theorem~\ref{prop:existence-exceptional-weights}. Moreover, since  \(B^Tq\neq0\), by Theorem \ref{thm:inherited-eigenspace},  no nonzero \(3\)-eigenvector is inherited. \end{example} 

Theorem~\ref{cor:one-weight-exceptional} characterizes the weights for which emergence occurs. 
We now show that the local properties of $p_\lambda$ at an exceptional point determine the dimension of the canonical emergent subspace.

We first recall some terminology for polynomial zero sets.  Let \(s_0\)
belong to the zero set of a polynomial \(p\). If
\(\nabla p(s_0)\neq0\), then the implicit function theorem shows that one coordinate can be
expressed locally as a smooth function of the other \(p-1\)
coordinates. In this case, \(s_0\) is called a \emph{regular point} of
the zero set. If $\nabla p(s_0)=0$, the point $s_0$ is called \emph{singular}.

For a nonempty subset
\(J=\{i_1,\ldots,i_k\}\subseteq\{1,\ldots,p\}\), define
\[
\partial_Jp_\lambda(s)
:=
\frac{\partial^k p_\lambda}
{\partial s_{i_1}\cdots\partial s_{i_k}}(s).
\]

\begin{theorem}[Multiplicity of emergence at exceptional weights]
\label{thm:higher-nullity-strata}
Let
\(
\Omega=\operatorname{diag}(\omega_1,\ldots,\omega_p)
\)
be any positive diagonal matrix such that
\(\mathcal C_{\lambda,\Omega}\neq\{0\}\), and set
\(
s_0=
\left(
\frac1{\omega_1},\ldots,\frac1{\omega_p}
\right).
\)

Then, for every \(r\in\{2,\ldots,d\}\),
\(
\dim\mathcal C_{\lambda,\Omega}\geq r
\)
if and only if
\(
\partial_Jp_\lambda(s_0)=0
\)
for every subset \(J\subseteq\{1,\ldots,p\}\) satisfying
\(
1\leq |J|\leq r-1.
\)

Moreover,
\(
\dim\mathcal C_{\lambda,\Omega}=1
\)
if and only if \(s_0\) is a regular point of the zero set of
\(p_\lambda\).
\end{theorem}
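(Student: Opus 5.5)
The plan is to compute the mixed partial derivatives of \(p_\lambda\) at \(s_0\) directly, using the fact that \(p_\lambda\) is multi-affine (Lemma~\ref{lem:polynomial-exceptional-weights}). Put \(F_0:=\widehat F_\lambda(s_0)=F_\lambda(\Omega)\), \(\zeta_i:=Z^Te_i\), and write \(s=s_0+t\). By \eqref{widehatF},
\[
p_\lambda(s_0+t)=\det\Bigl(F_0+\textstyle\sum_{i=1}^p t_i\zeta_i\zeta_i^T\Bigr)=\det\bigl(F_0+WTW^T\bigr),
\]
where \(W:=Z^T\in\mathbb R^{d\times p}\) and \(T:=\operatorname{diag}(t)\). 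Because the expression is affine in each \(t_i\), the derivative \(\partial_Jp_\lambda(s_0)\) is exactly the coefficient of \(\prod_{i\in J}t_i\). That coefficient is also the coefficient of \(\prod_{i\in J}t_i\) in \(\det(F_0+W_JT_JW_J^T)\), where \(W_J\) keeps only the columns indexed by \(J\). By Theorem~\ref{thm:explicit-reconstruction}, \(\dim\mathcal C_{\lambda,\Omega}=\dim\ker F_0\), so the statement reduces to a relation between the nullity of the symmetric matrix \(F_0\) and the low-order coefficients of this multi-affine expansion.

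\textbf{Step 1 (Laplace/Cauchy--Binet expansion).} I will expand \(\det(F_0+W_JT_JW_J^T)\) by multilinearity. This gives the coefficient of \(\prod_{i\in J}t_i\), with \(k=|J|\), as a signed sum of products. Each product is an order-\((d-k)\) minor of \(F_0\) times two order-\(k\) minors of \(W_J\). To keep this clean, I will first apply an orthogonal change of basis \(U\) with \(U^TF_0U=\operatorname{diag}(0_{\nu},D')\), where \(\nu=\dim\ker F_0\) and \(D'\) is invertible. Since \(\det U^2=1\), this does not change the determinant, and we may replace \(W\) by \(U^TW\).

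\textbf{Step 2 (necessity).} Suppose \(\dim\mathcal C_{\lambda,\Omega}=\nu\ge r\) and \(|J|=k\le r-1\). Then every minor of \(F_0\) of order \(d-k\ge d-r+1>d-\nu=\rank F_0\) vanishes, so \(\partial_Jp_\lambda(s_0)=0\).

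\textbf{Step 3 (sufficiency).} This is the step needing care. Suppose all \(\partial_Jp_\lambda(s_0)\) with \(1\le|J|\le r-1\) vanish, but \(\nu=\dim\ker F_0\) satisfies \(1\le\nu\le r-1\). I will compute the coefficient for \(|J|=\nu\) exactly. In the block form, the only surviving term uses the complementary minor \(\det D'\), and the coefficient equals
\[
\det(D')\,\det\bigl(N^TZ^TE_J\bigr)^2 ,
\]
where \(N\in\mathbb R^{d\times\nu}\) has columns forming a basis of \(\ker F_0\) and \(E_J=[e_i]_{i\in J}\). Since \(Z\) has orthonormal columns, \(ZN\) has rank \(\nu\). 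Hence the \(\nu\times p\) matrix \(N^TZ^T\) has \(\nu\) linearly independent columns, indexed by some \(J\). For that \(J\), \(\partial_Jp_\lambda(s_0)\neq0\) with \(|J|=\nu\le r-1\), a contradiction. Therefore \(\nu\ge r\). The main obstacle is verifying the sign and bookkeeping in this block computation. I expect it to follow from the Schur-complement identity
\[
\det\begin{bmatrix}A_{11}&A_{12}\\A_{21}&D'+A_{22}\end{bmatrix}
\]
with \(A=U^TW_JT_JW_J^TU\): the top-left \(\nu\times\nu\) block contributes exactly degree \(\nu\) in \(t\), and every other contribution has higher total degree in the \(t_i\).

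\textbf{Step 4 (regular points).} The final claim is the case \(r=2\). Since \(\Omega\) is exceptional, \(\dim\mathcal C_{\lambda,\Omega}\ge1\). By Steps 2--3, \(\dim\mathcal C_{\lambda,\Omega}\ge2\) if and only if every first partial of \(p_\lambda\) vanishes at \(s_0\), that is, \(\nabla p_\lambda(s_0)=0\). Hence \(\dim\mathcal C_{\lambda,\Omega}=1\) exactly when \(\nabla p_\lambda(s_0)\ne0\), which means \(s_0\) is a regular point of the zero set. If \(d=1\), the range \(\{2,\ldots,d\}\) is empty, and the same argument with \(\nu=1\) still shows that the gradient is nonzero.
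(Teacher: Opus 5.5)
Your proof is correct, and its sufficiency half takes a genuinely different route from the paper's.

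For necessity you and the paper use the same rank idea in different forms. The paper notes that $\widehat F_\lambda(s_0)+\sum_{j\in J}a_j\zeta_j\zeta_j^T$ has rank at most $d-k+|J|<d$, so $p_\lambda$ vanishes identically on the affine coordinate subspace $s_0+\operatorname{span}\{e_j:j\in J\}$. You instead expand the coefficient into minors of $F_0$ whose order exceeds its rank.

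For sufficiency, the paper never isolates an individual mixed partial. It restricts to the diagonal direction and uses $\sum_i\zeta_i\zeta_i^T=Z^TZ=I_d$ to get $q(t)=p_\lambda(s_0+t\mathbf 1_p)=\det(F_0+tI_d)=t^k\prod_j(\mu_j+t)$. It then compares this with $q^{(\ell)}(0)=\ell!\sum_{|J|=\ell}\partial_Jp_\lambda(s_0)$. That is shorter and needs no block or sign bookkeeping.

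Your computation gives the explicit formula $\partial_Jp_\lambda(s_0)=\det(D')\det(N^TZ^TE_J)^2$ for $|J|=\nu$. This is exact when $N$ is the first $\nu$ columns of $U$; for another basis of $\ker F_0$ it changes by a positive factor, which does not affect nonvanishing. The formula is sharper than what the paper proves:
\begin{itemize}
\item it shows $\dim\mathcal C_{\lambda,\Omega}=\min\{|J|:\partial_Jp_\lambda(s_0)\neq0\}$;
\item it identifies which $J$ attain this minimum, namely those for which the rows of $ZN$ indexed by $J$ are independent;
\item it shows that all these lowest-order partials have the sign of $\det D'$.
\end{itemize}
Summing over $|J|=\nu$ and using Cauchy--Binet, $\sum_{|J|=\nu}\det(N^TZ^TE_J)^2=\det(N^TN)=1$, which recovers the paper's identity $q^{(\nu)}(0)/\nu!=\det D'$. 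Your argument also handles $d=1$ uniformly instead of as a separate case.

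The Schur-complement step you flag as the main obstacle does go through. Every Leibniz term takes a degree-one entry from each of the first $\nu$ rows. The terms of total degree exactly $\nu$ are precisely those making up $\det(A_{11})\det(D')$, and $\det(A_{11})=\det(N^TZ^TE_J)^2\prod_{i\in J}t_i$ when $|J|=\nu$.
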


\begin{proof}
 By
Corollary~\ref{cor:basis-updated-eigenspace} and \eqref{widehatF},
\[
k:=\dim\ker \widehat F_\lambda(s_0)=\dim\mathcal C_{\lambda,\Omega}\geq1.
\]
In particular, \(p_\lambda(s_0)=\det \widehat F_\lambda(s_0)=0\).
For \(i=1,\ldots,p\), set \(\zeta_i:=Z^Te_i\).

Suppose that \(k\geq r\), and let
\(J=\{j_1,\ldots,j_m\}\subseteq\{1,\ldots,p\}\) satisfy
\(1\leq m=|J|\leq r-1\).
Define the polynomial in the variables \(t_j\), \(j\in J\), by
\[
g_J(t):=
p_\lambda\left(s_0+\sum_{j\in J}t_je_j\right).
\]
We first show that \(g_J\) is identically zero.
By \eqref{widehatF}, for any scalars \(a_j\), $j\in J$,
\[
\widehat F_\lambda\left(s_0+\sum_{j\in J}a_je_j\right)
=
\widehat F_\lambda(s_0)+\sum_{j\in J}a_j\zeta_j\zeta_j^T.
\]
The rank--nullity theorem gives \(\operatorname{rank}\widehat F_\lambda(s_0)=d-k\). Also,
 each matrix \(a_j\zeta_j\zeta_j^T\) has rank at most one.
Since \(k\geq r>|J|\), we obtain
\[
\begin{aligned}
\operatorname{rank}\left(
\widehat F_\lambda(s_0)+\sum_{j\in J}a_j\zeta_j\zeta_j^T
\right)
\leq \operatorname{rank}\widehat F_\lambda(s_0)+|J|
=d-k+|J|<d.
\end{aligned}
\]
Thus, the matrix $\widehat F_\lambda(s_0)+\sum_{j\in J}a_j\zeta_j\zeta_j^T$ is singular for every choice of the scalars
\(a_j\), and therefore,
\[
g_J(t)
=
\det\left(\widehat F_\lambda(s_0)+\sum_{j\in J}t_j\zeta_j\zeta_j^T\right)
\equiv
0.
\]
Each variable \(t_j\) changes only the \(j\)th coordinate of
the argument of \(p_\lambda\). Hence, by the chain rule,
\[
\frac{\partial^{m}g_{J}}{\partial t_{j_{1}}\cdots\partial t_{j_{m}}}(t)=\frac{\partial^{m}p_{\lambda}}{\partial s_{j_{1}}\cdots\partial
s_{j_{m}}}\left(  s_{0}+\sum_{j\in J}t_{j}e_{j}\right)  .
\]
Thus, 
\[
0=\frac{\partial^{m}g_{J}}{\partial t_{j_{1}}\cdots\partial t_{j_{m}}}(0)=\frac{\partial^{m}p_{\lambda}}{\partial s_{j_{1}}\cdots\partial
s_{j_{m}}}(s_{0})=\partial_{J}p_{\lambda}(s_{0}),
\]
where the first equality holds because \(g_J\) is identically
zero. This proves the ``only if'' implication.

Conversely, suppose that \(\partial_Jp_\lambda(s_0)=0\) for
every \(J\subseteq\{1,\ldots,p\}\) satisfying
\(1\leq|J|\leq r-1\).
Since the columns of \(Z\) are orthonormal,
\[
\sum_{i=1}^p\zeta_i\zeta_i^T
=
Z^T\left(\sum_{i=1}^pe_ie_i^T\right)Z
=
Z^TZ
=
I_d.
\]
Consequently, the polynomial, in the real variable $t$,
\(
q(t):=p_\lambda(s_0+t\mathbf1_p)
\)
satisfies
\[
q(t)
=
\det\left(\widehat F_\lambda(s_0)+t\sum_{i=1}^p\zeta_i\zeta_i^T\right)
=
\det(\widehat F_\lambda(s_0)+tI_d).
\]

Recall that \(p_\lambda\) is affine in each variable.
Thus, every partial derivative in which the same variable
is differentiated twice is zero.
Repeated application of the chain rule therefore gives
\begin{equation}\label{derivatives of q}
q^{(\ell)}(0)
=
\ell!
\sum_{\substack{J\subseteq\{1,\ldots,p\}\\|J|=\ell}}
\partial_Jp_\lambda(s_0),
\qquad 1\leq\ell\leq r-1.
\end{equation}
The factor \(\ell!\) accounts for the possible orders of
differentiation with respect to the \(\ell\) distinct variables
indexed by \(J\).
By assumption, every term in the sum is zero, so
\(q^{(\ell)}(0)=0\) for \(1\leq\ell\leq r-1\).
Since also \(q(0)=p_\lambda(s_0)=0\), we conclude that
\(q(t)\) is divisible by \(t^r\).

The matrix \(\widehat F_\lambda(s_0)\) is real symmetric and therefore has an
orthonormal basis of eigenvectors. Since \(\dim\ker \widehat F_\lambda(s_0)=k\),
exactly \(k\) of its eigenvalues are zero, counted with
multiplicity.
Let \(\mu_1,\ldots,\mu_{d-k}\) be its nonzero eigenvalues,
also counted with multiplicity. Then
\begin{equation}\label{eq:qfactorization}
q(t)
=
\det(\widehat F_\lambda(s_0)+tI_d)
=
t^k\prod_{j=1}^{d-k}(\mu_j+t),
\end{equation}
where an empty product is understood to equal \(1\).
Since each \(\mu_j\neq0\), the product is nonzero at \(t=0\).
Thus, \(q(t)\) is divisible by \(t^k\) but not by \(t^{k+1}\).
Since it is also divisible by \(t^r\), we conclude that
\(k\geq r\),   proving the ``if'' claim.

For \(d\geq2\), the final assertion follows directly from the
equivalence just proved with \(r=2\), since \(k\geq1\) and $p_{\lambda}(s_0)=0.$

It remains to consider \(d=1\). In this case, \(1\leq k\leq d\)
implies \(k=1\), so \(\widehat F_\lambda(s_0)\) is the
\(1\times1\) zero matrix. Since \(Z^TZ=I_1\), we have
\[
q(t)=p_\lambda(s_0+t\mathbf1_p)
=
\det\bigl(\widehat F_\lambda(s_0)+tI_1\bigr)
=
t.
\]
Differentiating with respect to \(t\), evaluating at \(t=0\), and using \eqref{derivatives of q}
gives
\[
\sum_{i=1}^p
\frac{\partial p_\lambda}{\partial s_i}(s_0)
=
1.
\]
Thus, at least one first partial derivative is nonzero,
and \(s_0\) is a regular point of the zero set of \(p_\lambda\).
Since \(\dim\mathcal C_{\lambda,\Omega}=k=1\), the final
assertion also holds when \(d=1\).
\end{proof}

The following example illustrates the preceding results on a graph with 1000 vertices but only three update edges. The 
exceptional weight vectors are described implicitly by the polynomial $p_{\lambda}(s_1, s_2, s_3)$, and we identify an exceptional weight vector for which $\mathcal C_{\lambda,\Omega}$ has dimension $2$. 
The analysis reduces to the $3\times3$ matrix $F_\lambda(\Omega)$.

\begin{example}[A large cycle with three updated edges]
\label{ex:large-cycle-three-edges}
Let \(G\)  be the cycle on 1000 vertices with unit edge weights. Consider its Laplacian eigenvalue
\[
\lambda
=
2-2\cos\left(\frac{8\pi}{1000}\right)
\approx 6.316214334\times10^{-4}.
\]
The corresponding eigenspace has dimension two and is spanned by the vectors $u$ and $v$ whose
\(j\)th entries are
\[
u_j=\sqrt{\frac{2}{1000}}
\cos\left(\frac{8\pi(j-1)}{1000}\right)
\quad\text{and}\quad
v_j=\sqrt{\frac{2}{1000}}
\sin\left(\frac{8\pi(j-1)}{1000}\right).
\]
Let $Q_{\lambda}=[u, v].$ We add the three diameter edges
\[
\{1,501\},\qquad
\{101,601\},\qquad
\{201,701\},
\]
and let \(B\in\mathbb R^{1000\times3}\) contain their oriented edge
vectors. The entries of both \(\lambda\)-eigenvectors agree at the
endpoints of each update edge. Hence, \(Q_\lambda^TB=0\). Thus, \(\mathcal K_\lambda=\mathbb R^3\), and we take \(Z=I_3\). By
Theorem~\ref{thm:inherited-eigenspace}, the original two-dimensional
\(\lambda\)-eigenspace is inherited for every positive choice of the
three update weights. 

Let $b_i$ denote the $i$th column of $B$, as before. A computation gives
\[
b_i^TG_\lambda b_j
=
\frac{4}{1000}
\sum_{\substack{m=0\\m\ {\rm odd}}}^{999}
\frac{
\cos\!\left(
2\pi m(r_i-r_j)/1000
\right)
}{
\lambda_m-\lambda
},
\]
where \(r_1=0\), \(r_2=100\), \(r_3=200\), and
\(\lambda_m=2-2\cos(2\pi m/1000)\).
Evaluating these sums,  we obtain
\[
M_\lambda
=
\Gamma_\lambda
=
\begin{pmatrix}
0&-\alpha&\beta\\
-\alpha&0&-\alpha\\
\beta&-\alpha&0
\end{pmatrix},
\]
where
\[
\alpha
=
\frac{\sin(\pi/5)}{\sin(\pi/125)}
\approx23.3896943853,
\qquad
\beta
=
\frac{\sin(2\pi/5)}{\sin(\pi/125)}
\approx37.8453205018.
\]
Consequently, for
\(\Omega=\operatorname{diag}(\omega_1,\omega_2,\omega_3)\),
\begin{equation}
\label{eq:large-cycle-reduced-matrix}
F_\lambda(\Omega)
=
\begin{pmatrix}
\omega_1^{-1}&-\alpha&\beta\\
-\alpha&\omega_2^{-1}&-\alpha\\
\beta&-\alpha&\omega_3^{-1}
\end{pmatrix}.
\end{equation}
In the reciprocal variables \(s_i=1/\omega_i\), its determinant is
\begin{equation}
\label{eq:large-cycle-exceptional-polynomial}
p_\lambda(s)
=
s_1s_2s_3
-\alpha^2(s_1+s_3)
-\beta^2s_2
+2\alpha^2\beta.
\end{equation}

This polynomial displays two features of the general theory. First, if we fix
\(s_1=s_3=\beta\), then
\(p_\lambda(s)=0\) for every \(s_2>0\) (recall Corollary \ref{cor:fix-one-variable}). By Theorem \ref{cor:one-weight-exceptional}, if
\[
\omega_1=\omega_3=\frac1\beta
\approx0.0264233461,
\]
then every positive value of \(\omega_2\) is exceptional. 
Now consider
\[
\begin{aligned}
s_*&=\left(\beta,\frac{\alpha^2}{\beta},\beta\right),\\
\Omega_*&=\operatorname{diag}
\left(\frac1\beta,\frac{\beta}{\alpha^2},\frac1\beta\right)\\
&\approx\operatorname{diag}
(0.0264233461,0.0691772181,0.0264233461).
\end{aligned}
\]
At this point,
\[
F_\lambda(\Omega_*)
=
\begin{pmatrix}
\beta&-\alpha&\beta\\
-\alpha&\alpha^2/\beta&-\alpha\\
\beta&-\alpha&\beta
\end{pmatrix}
=
qq^T,
\qquad
q=
\begin{pmatrix}
\sqrt{\beta}\\
-\alpha/\sqrt{\beta}\\
\sqrt{\beta}
\end{pmatrix}.
\]
Therefore, \(F_\lambda(\Omega_*)\) has rank one and nullity two. Hence, by Corollary \ref{cor:basis-updated-eigenspace},
\(\dim\mathcal C_{\lambda,\Omega_*}=2\). Since the original
two-dimensional \(\lambda\)-eigenspace is inherited, the multiplicity of
\(\lambda\) in the updated Laplacian is four. 

Table~\ref{tab:large-cycle-weights} reports the dimension of $\mathcal{C}_{\lambda, \Omega}$ for  several graph updates $G+H$, three of which correspond to exceptional weight vectors. In each case, the multiplicity of $\lambda$ as an eigenvalue of $L_{G+H}$ is obtained by adding $2= \dim(\mathcal{U}_{\lambda})$ to the dimension of $\mathcal{C}_{\lambda, \Omega}.$ 
For this purpose, we computed
the kernel of the \(3\times 3\) matrix \(F_\lambda(\Omega)\). We note that the term \(M_\lambda\), which involves the computation of the pseudoinverse of a $1000\times 1000$ matrix, needs to be computed only once, since it is independent of $\Omega.$ Then,  many choices of
the three update weights can be tested without repeatedly computing the relevant eigenspace of the
\(1000\times1000\) updated Laplacian.

The exceptional $\Omega$'s with equal diagonal entries appearing in Table~\ref{tab:large-cycle-weights} are obtained from the two negative eigenvalues
of \(M_\lambda\), according to Theorem \ref{prop:existence-exceptional-weights}. 

\begin{table}
\tbl{Dimensions of $C_{\lambda, \Omega}$ and $E_{\lambda}(L_{G+H})$ for three-edge updates of the cycle on 1000 vertices.}
{\begin{tabular}{@{}ccc@{}}
\toprule
\(\operatorname{diag}(\Omega)\)
& \(\dim\mathcal C_{\lambda,\Omega}\)
& \(m_\lambda(L_{G+H})\) \\
\midrule
\((0.030000,0.040000,0.050000)\) & \(0\) & \(2\) \\
\((0.0264233461,0.0264233461,0.0264233461)\) & \(1\) & \(3\) \\
\((0.0521230248,0.0521230248,0.0521230248)\) & \(1\) & \(3\) \\
\((0.0264233461,0.0691772181,0.0264233461)\) & \(2\) & \(4\) \\
\bottomrule
\end{tabular}}
\tabnote{The original cycle has unit edge weights. The updated edges are
\(\{1,501\}\), \(\{101,601\}\), and \(\{201,701\}\), in the order of the
diagonal entries of \(\Omega\). The eigenvalue is
\(\lambda=2-2\cos(8\pi/1000)\). The second column gives the dimension of
$\mathcal{C}_{\lambda, \Omega}$; the third gives the multiplicity of
\(\lambda\) in the updated Laplacian. The inherited subspace has dimension two.
The exceptional weights in the last three rows are shown rounded;
the stated dimensions refer to the corresponding exact weights.}
\label{tab:large-cycle-weights}
\end{table}

\end{example}

\section*{Disclosure statement}
No potential conflict of interest was reported by the author(s).

\section*{Funding}
The work of the second author is funded by  FCT - Fundação para a Ciência e a Tecnologia, I.P., through national funds, under the project UID/04561/2025. https://doi.org/10.54499/UID/04561/2025

\section*{Declaration of  use of generative AI and AI-assisted technologies
}



During the preparation of this work, the authors used ChatGPT, developed
by OpenAI, to assist with improving the clarity of the presentation,
checking notation and internal consistency, identifying possible gaps or
errors in mathematical arguments, suggesting alternative formulations of
statements and proofs, and identifying potentially relevant literature.
All the AI suggestions were verified by the authors independently. 
ChatGPT also assisted with preparing Python code to verify the
calculations in the examples. 

The authors have confirmed the originality and accuracy of the
manuscript. They have also reviewed the terms of use applicable to
ChatGPT and confirm that the AI-assisted content is suitable for
publication in accordance with those terms.

\appendix
\section{List of notation}
\label{app:notation}

Below, we present a summary of the notation used more frequently in the paper. References point to
its definition in the main text; for unnumbered definitions, the location
is given relative to a labelled equation or result. For standard symbols
not explicitly defined in the text, a representative use is indicated.

\begingroup
\small
\setlength{\tabcolsep}{5pt}
\renewcommand{\arraystretch}{1.15}
\setlength{\LTpre}{0.7\baselineskip}
\setlength{\LTpost}{0.7\baselineskip}

\begin{longtable}{@{}
  >{\raggedright\arraybackslash}p{0.22\textwidth}
  >{\raggedright\arraybackslash}p{\dimexpr0.53\textwidth-4\tabcolsep\relax}
  >{\raggedright\arraybackslash}p{0.25\textwidth}
  @{}}

\toprule
\textbf{Notation} & \textbf{Description} & \textbf{Reference} \\
\midrule
\endfirsthead

\multicolumn{3}{@{}l}{\textit{List of notation (continued)}} \\[3pt]
\toprule
\textbf{Notation} & \textbf{Description} & \textbf{Reference} \\
\midrule
\endhead

\midrule
\multicolumn{3}{r@{}}{\textit{Continued on the next page}} \\
\endfoot

\bottomrule
\endlastfoot

\(\sigma(A)\)
& Spectrum of a square matrix \(A\).
& After \eqref{eq:eigenspace-notation} \\

\(E_\mu(A)\)
&  \(\ker(A-\mu I)\); the \(\mu\)-eigenspace when
  \(\mu\in\sigma(A)\).
& \eqref{eq:eigenspace-notation} \\

\(m_\mu(A)\)
& Dimension of \(E_\mu(A)\); the geometric multiplicity of
  \(\mu\) when \(\mu\in\sigma(A)\).
& \eqref{eq:eigenspace-notation} \\

\(G=(V,E,\eta)\)
& Original positively weighted graph, with vertex set
  \(V=\{1,\ldots,n\}\), edge set \(E\), and weight function \(\eta\).
& Subsection~\ref{sec:positivegraph} \\

\(L_G\)
& Laplacian of the original graph $G$.
& \eqref{eq:weighted-laplacian-sum} \\

\(\lambda\)& Eigenvalue of the original Laplacian \(L_G\) under consideration.
& Before \eqref{eq:spectral-projectors} \\

\(m\)
&  \(m_\lambda(L_G)\).
& Before \eqref{eq:spectral-projectors} \\

\(Q_\lambda\)
& Matrix with orthonormal columns spanning \(E_\lambda(L_G)\subseteq \mathbb{R}^{n}\).
& Before \eqref{eq:spectral-projectors} \\

\(P_\lambda\)
& Orthogonal projector \(Q_\lambda Q_\lambda^T\) onto
  \(E_\lambda(L_G)\).
& \eqref{eq:spectral-projectors} \\

\(P_{\lambda,\perp}\)
& Orthogonal projector \(I_n-P_\lambda\) onto
  \(E_\lambda(L_G)^\perp\).
& \eqref{eq:spectral-projectors} \\

\(H=(V,F,\omega)\)
& Update graph on \(V\), with updated edge set \(F\) and
  positive weight-increment function \(\omega\).
& Before \eqref{eq:Omega-definition} \\

\(p\)
& Number of updated edges: \(p=|F|\).
& Before \eqref{eq:Omega-definition} \\

\(L_{G+H}\)
& Laplacian of the updated graph $G+H$.
& \eqref{eq:updated-laplacian} \\

\(B\)
& Oriented incidence matrix of the update graph, with columns
  \(b_1,\ldots,b_p\).
& \eqref{eq:B-X-definitions} \\

\(\Omega\)
& Positive diagonal matrix of the update-edge weight increments.
& \eqref{eq:Omega-definition} \\

\(\omega_i\)
& \(i\)th diagonal entry of \(\Omega\);
  \((\omega_1,\ldots,\omega_p)\) is the associated vector of weights.
& Before \eqref{def:plambda} \\

\(X_\Omega\)
& Weighted incidence matrix of the update graph:
  \(X_\Omega=B\Omega^{1/2}\).
& \eqref{eq:B-X-definitions} \\

\(\mathcal U_\lambda\)
& Inherited eigenspace:
  \(E_\lambda(L_G)\cap E_\lambda(L_{G+H})\).
& \eqref{def:Ulambda} \\

\(\mathcal C_{\lambda,\Omega}\)
& Canonical emergent subspace:
  \(E_\lambda(L_{G+H})\cap\mathcal U_\lambda^\perp\).
& \eqref{def:ClambdaOmega} \\

\(\mathbin{\oplus^\perp}\)
& Orthogonal direct sum of subspaces.
& Used in \eqref{eq:inherited-emergent-decomposition} \\

\(G_\lambda\)
& Moore--Penrose inverse of \(L_G-\lambda I_n\).
& \eqref{def:Glambda} \\

\(\Gamma_\lambda\)
& Symmetric matrix \(B^TG_\lambda B\in \mathbb{R}^{p\times p}\).
& \eqref{def:gamma} \\

\(\mathcal K_\lambda\)
& Subspace \(\ker(Q_\lambda^TB)\) of \(\mathbb R^p\).
& \eqref{def:Klambda} \\

\(d\)
& Dimension of \(\mathcal K_\lambda\):
  \(p-\operatorname{rank}(B^TQ_\lambda)\).
& After \eqref{def:Klambda} \\

\(Z\)
& Matrix in \(\mathbb R^{p\times d}\) with orthonormal columns
  spanning \(\mathcal K_\lambda\).
& After \eqref{def:Klambda} \\

\(F_\lambda(\Omega)\)
& Reduced symmetric matrix
  \(Z^T(\Omega^{-1}+\Gamma_\lambda)Z\in \mathbb{R}^{d\times d}\).
& \eqref{eq:weight-matrix-pencil} \\

\(M_\lambda\)
& Weight-independent symmetric matrix \(Z^T\Gamma_\lambda Z\).
& \eqref{def:Hlambda} \\

\(\widehat F_\lambda(s)\)
& Reduced matrix expressed in reciprocal variables:
  \(\widehat F_\lambda(s)
    =M_\lambda+\sum_{i=1}^p s_iZ^Te_ie_i^TZ\).
& \eqref{widehatF} \\

\(s,\ s_i\)
& Reciprocal-weight vector \(s=(s_1,\ldots,s_p)\), where
  \(s_i=1/\omega_i\).
& Before \eqref{def:plambda} \\

\(p_\lambda(s)\)
& Determinant \(\det F_\lambda(\Omega)\), viewed as a polynomial
  in the reciprocal weights.
& \eqref{def:plambda} \\

\(T_{\lambda,\Omega}\)
& Edge-difference map \(y\mapsto X_\Omega^Ty\) from
  \(E_\lambda(L_{G+H})\) to \(\mathbb R^p\).
& \eqref{eq:inheritance-edge-difference-map} \\

\(\mathcal S_{\lambda,\Omega}\)
& Edge-difference space
  \(\operatorname{range}(T_{\lambda,\Omega})\).
& \eqref{def:SlambdaOmega} \\

\(\Phi_{\lambda,\Omega}\)
& Inverse of the restriction of \(T_{\lambda,\Omega}\) to
  \(\mathcal C_{\lambda,\Omega}\); maps edge differences to
  their canonical emergent representatives.
& \eqref{def:Phi-lambda-Omega} \\

\(\mathcal Z_{\lambda,\Omega}\)
& Rescaled edge-difference space
  \(\{\Omega B^Ty:y\in E_\lambda(L_{G+H})\}\), equal to
  \(\Omega^{1/2}\mathcal S_{\lambda,\Omega}\).
& \eqref{def:ZlambdaOmega} \\

\end{longtable}
\endgroup

\end{document}